\documentclass[11pt]{amsart}  
\usepackage{amsthm, amsmath, amscd, amssymb, latexsym, stmaryrd, color}

\usepackage[all]{xypic}
\input xypic

\usepackage[top=3.0cm, bottom=3.0cm, left=3.0cm, right=3.0cm]{geometry}

\theoremstyle{plain}
\newtheorem{thm}{Theorem}
\newtheorem{theorem}{Theorem}[section]
\newtheorem{lemma}[theorem]{Lemma}
\newtheorem{proposition}[theorem]{Proposition}
\newtheorem{prop-def}[theorem]{Proposition-Definition}

\theoremstyle{definition}

\theoremstyle{remark}

\newtheorem*{ack}{Acknowledgement}

\usepackage[mathscr]{eucal}
\usepackage{graphics, graphpap}
\usepackage{array, tabularx, longtable}
\usepackage{color}
\numberwithin{equation}{section}

\def\Var{\mathrm{Var}}
\def\Mor{\mathrm{Mor}}
\def\Cons{\mathrm{Cons}}
\def\SA{\mathrm{SA}}
\def\loc{\mathrm{loc}}

\def\vol{\mathrm{vol}}
\def\pr{\mathrm{pr}}
\def\prsf{\mathsf{pr}}

\def\sr{\mathrm{sr}}

\def\ord{\mathrm{ord}}

\def\Spec{\mathrm{Spec}}
\def\Field{\mathrm{Field}}
\def\Id{\mathrm{Id}}
\def\acl{\mathrm{acl}}
\def\Ob{\mathrm{Ob}}

\def\DP{\mathrm{DP}}
\def\P{\mathrm{P}}
\def\PR{\mathrm{PR}}
\def\Rings{\mathrm{Rings}}

\def\Def{\mathrm{Def}}
\def\RDef{\mathrm{RDef}}
\def\GDef{\mathrm{GDef}}
\def\SDef{\mathrm{SDef}}
\def\ac{\mathrm{ac}}
\def\I{\mathrm{I}}

\def\L{\mathbb{L}}


\title[Grothendieck rings and measurable subassignments]{Grothendieck rings of definable subassignments\\ and equivariant motivic measures}  
\author{L\^e Quy Thuong}
\dedicatory{\it Dedicated to Professors H\`a Huy Vui and Ta L\^e Loi on the special occasion of their birthdays}
\date{}
\address{University of Science, Vietnam National University, Hanoi \newline
\indent 334 Nguyen Trai street, Thanh Xuan district, Hanoi, Vietnam}
\email{leqthuong@gmail.com}

\thanks{The first author's research is funded by the Vietnam National University, Hanoi (VNU) under project number QG.19.06.}

\keywords{motivic measure, definable subassignment, measurable subassignment}
\subjclass[2010]{Primary 03C10, 14E18, 14G10}

\begin{document}           

\begin{abstract}
The paper studies categories of definable subassignments with some category equivalences to semi-algebraic and constructible subsets of arc spaces of algebraic varieties. These materials allow us to compare the motivic measure of Cluckers-Loeser and the one of Denef-Loeser in certain classes of definable subassignments.
\end{abstract}

\maketitle  

\section{Introduction}\label{sec1}
Since it was invented by Kontsevich at the 1995 Orsay seminar, geometric motivic integration has attained a full development and has become one of the central objects of algebraic geometry. From algebraic varieties to formal schemes, the development records contributions of several authors, such as Denef-Loeser \cite{DL1, DL2}, Sebag \cite{Se2004}, Loeser-Sebag \cite{LS}, Nicaise-Sebag \cite{NS, Ni2008}, Nicaise \cite{Ni2008}. Another point of view on motivic integration known as arithmetic motivic integration was developed almost at the same time, which works over only $p$-adic fields (see \cite{DL3}). Cluckers-Loeser's motivic integration \cite{CL2005, CL, CL2010}, which was built on model theory with respect to the Denef-Pas languages, is a general theory of motivic integration. It allows to specialize to both of arithmetic and geometric points of view (see \cite{GY2009}, \cite{CGH2014}, \cite{CR2019}). This theory of motivic integration has an important application to the fundamental lemma (see \cite{CHL}).
 
Recently, also using model theory with different languages, Hrushovski-Kazhdan \cite{HK} and Hrushovski-Loeser \cite{HL2016} have extended geometric motivic integration to the arithmetic aspect, with many interesting results and applications. 

Throughout the paper, the ground field $k$ will be always a field of characteristic zero. The present paper discusses the motivically measurable subassignments in the formalism of Cluckers-Loeser for motivic integration \cite{CL2005, CL, CL2010}, it also provides a comparison of their measure with the classical motivic measure of Denef-Loeser \cite{DL1, DL2}. By this purpose, we concentrate on a special Denef-Pas language $\mathcal L_{\DP,\P}$ with the Presburger language for value group sort, and consider the theory $T_{\acl}$ of algebraically closed fields containing $k$. Let $\Field_k$ be the category of all fields $K$ containing $k$, and $\Field_k(T_{\acl})$ the category of fields $K$ over $k$ such that each $(K(\!(t)\!), K, \mathbb Z)$ is a model of $T_{\acl}$. For $K$ in $\Field_k$, we consider the natural valuation map $\ord_t: K(\!(t)\!)^{\times}\to \mathbb Z$ augmented by $\ord_t(0)=+\infty$, and the natural angular component map $\overline{\ac}: K(\!(t)\!)^{\times}\to K$ augmented by  $\overline{\ac}(0)=0$. A basic affine subassignment $h[m,n,r]$ (or, in another notation, $h_{\mathbb A_{k(\!(t)\!)}^m\times \mathbb A_k^n\times \mathbb Z^r}$) is a functor $K\mapsto K(\!(t)\!)^m\times K^n\times\mathbb Z^r$ from $\Field_k$ to the category of sets. A definable subsassignment of $h[m,n,r]$ is a set of points in $h[m,n,r]$ satisfying a given formula $\varphi$; it is not a functor in general. In the first half of this paper, we study the categories concerning definable $T_{\acl}$-subsassignments $\SDef_k(\mathcal L_{\DP,\P}(k),T_{\acl})$ and $\SDef_k(X,\mathcal L_{\DP,\P}(k),T_{\acl})$, in which the language $\mathcal L_{\DP,\P}(k)$ is an extension of $\mathcal L_{\DP,\P}$ by adding constants in $k$ so that all polynomials in both valued field sort and residue field sort having coefficients in $k$.

The category $\SDef_k(\mathcal L_{\DP,\P}(k),T_{\acl})$ has objects to be small definable $T_{\acl}$-subassignments, which is comparable with the category $\SA_k$ of semi-algebraic subsets of the arc space of an algebraic $k$-variety. When fixing a $k$-variety $X$ we get the subcategory $\SDef_k(X,\mathcal L_{\DP,\P}(k),T_{\acl})$ of $\SDef_k(\mathcal L_{\DP,\P}(k))$ whose objects are all the small definable $T_{\acl}$-subassignments of $h_{X\times_k\Spec k(\!(t)\!)}$. The definition of $\SA_k$ and $\SA_k(X)$ is given in Section \ref{sect31}. The first main result of this paper is as follows.

\begin{thm}[Theorem \ref{equivalence}]
The categories $\SDef_k(\mathcal L_{\DP,\P}(k),T_{\acl})$ and $\SA_k$ are equivalent. If $X$ is an algebraic $k$-variety, then the categories $\SDef_k(X,\mathcal L_{\DP,\P}(k),T_{\acl})$ and $\SA_k(X)$ are equivalent.
\end{thm}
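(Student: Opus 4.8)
The plan is to construct explicit quasi-inverse functors $\Phi\colon\SDef_k(\mathcal L_{\DP,\P}(k),T_{\acl})\to\SA_k$ and $\Psi\colon\SA_k\to\SDef_k(\mathcal L_{\DP,\P}(k),T_{\acl})$ realising the dictionary \emph{valued field sort} $\leftrightarrow$ \emph{arc coordinates}, \emph{value group sort} $\leftrightarrow$ $\ord_t$, \emph{residue field sort} $\leftrightarrow$ $\overline{\ac}$. First I would pin down the presentation of objects on each side: an object of $\SDef_k(\mathcal L_{\DP,\P}(k),T_{\acl})$ is, by definition, a subassignment of some $h[m,n,r]$ carved out by an $\mathcal L_{\DP,\P}(k)$-formula $\varphi$, taken only up to $T_{\acl}$-equivalence; while, by the description recalled in Section~\ref{sect31}, an object of $\SA_k$ is a semi-algebraic subset of the arc space $\mathcal L(Y)$ of an algebraic $k$-variety $Y$, i.e.\ a subset which in some affine chart is a Boolean combination of conditions on the values $\ord_t$ and of polynomial relations among reductions, with coefficients in $k$. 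The crux is that, \emph{modulo} $T_{\acl}$, these two classes of conditions convert into one another, which is exactly the content of the Denef--Pas quantifier elimination: every $\mathcal L_{\DP,\P}$-formula is $T_{\acl}$-equivalent to one with no quantifiers running over the valued field sort.

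For $\Psi$, given a semi-algebraic $A\subseteq\mathcal L(Y)$, I would take a finite affine open cover $Y=\bigcup_i Y_i$ with closed immersions $Y_i\hookrightarrow\A^{N_i}_k$; on $Y_i$ the set $A$ is defined by a semi-algebraic formula which, reading the valuation as $\ord_t$ and the reduction conditions through $\overline{\ac}$, is literally an $\mathcal L_{\DP,\P}(k)$-formula in the valued field and value group sorts, and thus cuts out a small definable $T_{\acl}$-subassignment of $h[N_i,0,0]$, restricted further by the (definable) equations of $Y_i$ and by the integrality conditions $\ord_t\ge 0$ on the coordinates. These glue along the (definable) overlaps to an object $\Psi(A)\in\SDef_k$; independence of the cover and the embeddings follows because any two such presentations of $A$ are related by a semi-algebraic isomorphism of arc spaces, which translates into a definable isomorphism of the associated subassignments. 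In the absolute case the two auxiliary sorts of $h[m,n,r]$ are absorbed by enlarging $Y$: a point $(a,\xi,j)\in K(\!(t)\!)^m\times K^n\times\mathbb Z^r$ is encoded as the arc with coordinates $a$ on $\A^m_k$, residue $\xi$ on $\A^n_k$, and a pair of auxiliary coordinates per entry $j_\ell$ whose $\ord_t$'s record $\max(j_\ell,0)$ and $\max(-j_\ell,0)$; this is where smallness is used, keeping $(m,n,r)$ bounded so that one lands in a single arc space.

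Conversely, for $\Phi$, given $S\subseteq h[m,n,r]$ defined by $\varphi$, I would first replace $\varphi$ by an equivalent formula without valued-field quantifiers; it then becomes a Boolean combination of inequalities $\ord_t f(a)\le\ord_t g(a)$, congruences $\ord_t f(a)\equiv c\pmod{d}$, and polynomial relations among the $\overline{\ac} f(a)$ and the residue-field variables, for finitely many polynomials $f,g$ with coefficients in $k$, which is precisely a semi-algebraic condition on the arc space of the relevant product variety; hence $\Phi(S)$ is a well-defined object of $\SA_k$. That $\Phi$ and $\Psi$ are mutually quasi-inverse on objects is then seen by inspection, the natural isomorphisms being the identity on points under the dictionary above. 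For morphisms: a morphism $S\to S'$ in $\SDef_k$ is a function with definable graph, and applying $\Phi$ to that graph (again using quantifier elimination) yields a semi-algebraic subset of $\Phi(S)\times\Phi(S')$ which is the graph of a semi-algebraic map; conversely a semi-algebraic map has semi-algebraic, hence definable, graph, so defines a morphism in $\SDef_k$. Since morphisms on both sides are taken modulo $T_{\acl}$ and the models of $T_{\acl}$ comprise all $(K(\!(t)\!),K,\mathbb Z)$ with $K$ algebraically closed over $k$, this correspondence is bijective, so $\Phi$ is full and faithful; together with essential surjectivity this yields the equivalence. The relative statement follows by restriction: $\Phi$ and $\Psi$ send $h_{X\times_k\Spec k(\!(t)\!)}$, with its integrality condition, to $\mathcal L(X)$ and back, hence restrict to an equivalence between $\SDef_k(X,\mathcal L_{\DP,\P}(k),T_{\acl})$ and $\SA_k(X)$.

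I expect the main obstacle to lie in the morphism part, in tandem with the well-definedness checks. The object correspondence is essentially a chase through the two definitions once the sort dictionary is fixed, but to match \emph{definable functions} with \emph{semi-algebraic maps} in both directions one genuinely needs the Denef--Pas quantifier elimination over $T_{\acl}$ to normalise the graph of an arbitrary definable function into valuation-and-reduction data, and on the $\SA_k$ side one must patch the local rational presentations of a semi-algebraic map; in parallel one must verify that every construction is independent of the chosen affine charts and embeddings, which is what makes $\Phi$ and $\Psi$ well-defined as functors. A secondary but real subtlety is the encoding of the residue field and value group sorts in the absolute case and checking that it is compatible with the relative picture.
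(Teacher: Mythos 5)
Your overall strategy — quasi-inverse functors, sort dictionary (valued field $\leftrightarrow$ arcs, value group $\leftrightarrow$ $\ord_t$, residue field $\leftrightarrow$ $\overline{\ac}$), and Denef--Pas quantifier elimination over $T_{\acl}$ as the engine that converts a definable formula into a semi-algebraic condition and back — is the same as the paper's, and your treatment of morphisms (reduce to graphs and projections, translate graphs via the object correspondence) is also the paper's route.

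There is, however, a genuine misstep in your description of the objects of $\SDef_k(\mathcal L_{\DP,\P}(k),T_{\acl})$. You take them to be ``subassignments of some $h[m,n,r]$,'' and then spend a paragraph on absorbing the residue-field and value-group sorts into extra arc coordinates via a coding of $j_\ell$ by $(\max(j_\ell,0),\max(-j_\ell,0))$. But, by the definition given just before the theorem, an object of $\SDef_k(\mathcal L_{\DP,\P}(k),T_{\acl})$ is a pair $(\mathsf A, h_{X\times_k\Spec k(\!(t)\!)})$ with $X$ an algebraic $k$-variety and $\mathsf A$ a \emph{small} definable $T_{\acl}$-subassignment of $h_{X\times_k\Spec k(\!(t)\!)}$; in an affine chart this is a subassignment of $h[m,0,0]$ with $\ord_t x_i\ge 0$ on the ambient coordinates. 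Residue-field and value-group variables can occur only as bound or parameter variables in the defining formula, not as free coordinates of the object, exactly as on the $\SA_k$ side (cf.\ (3.1)--(3.4), where $\alpha$ is a given tuple of integers). So no encoding is needed, and the one you propose is in fact unsound as a categorical equivalence: it alters the underlying ``set'' of the object (one passes from $K(\!(t)\!)^m\times K^n\times\mathbb Z^r$ to a larger arc space), so it would change the morphism sets and would moreover require extra normalisations (e.g.\ $\min(\ord_t u_\ell,\ord_t v_\ell)=0$ and a constraint on $\overline{\ac}$) to make the coding injective. What is actually doing the work in the paper at this point, and what your proposal passes over, is the descent observation in (4.2): because the defining polynomials have coefficients in $k$ (not just $k(\!(t)\!)$), an affine piece $\mathcal U$ of $X\times_k\Spec k(\!(t)\!)$ is of the form $U\times_k\Spec k(\!(t)\!)$ for a unique affine $k$-subvariety $U\subset\mathbb A_k^m$, and this is what lets one match the cover on the $h$-side with a cover of $X$ on the arc-space side. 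I would also flag that smallness is the condition $\ord_t x_i\ge 0$ ensuring one lands in $\mathscr L(U)\subset\mathscr L(\mathbb A_k^m)$ rather than in the whole Laurent-series space, not a bound on $(m,n,r)$ as you write. With the object class corrected and the encoding removed, the rest of your argument aligns with the paper's.
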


Let $S$ be an affine $k$-variety, and let $\RDef_{h_S}(\mathcal L_{\DP,\P}(k),T_{\acl})$ be the category whose objects $\mathsf X\to h_S$ are the $h_S$-projection of definable $T_{\acl}$-subassignment $\mathsf X$ of $h_S \times h_{\mathbb A_k^n}=h_{S\times_k \mathbb A_k^n}$ for some $n$ in $\mathbb N$. In Section \ref{Prel} we describe the category $\Cons_S$ of constructible morphisms from constructible sets over $k$ to $S$ in which a morphism in $\Cons_S$ from $X\to S$ to $Y\to S$ is determined uniquely up to $S$-isomorphism on $X$ by the graph of an $S$-morphism $X\to Y$. This category $\Cons_S$ has in fact the same Grothendieck ring with the category $\Var_S$. The second main result of the paper is stated as follows.

\begin{thm}[Theorem \ref{Lem41}]
For any $k$-variety $S$, the categories $\RDef_{h_S}(\mathcal L_{\DP,\P}(k),T_{\acl})$ and $\Cons_S$ are equivalent.
\end{thm}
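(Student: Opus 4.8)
The plan is to exhibit a pair of mutually quasi-inverse functors between $\RDef_{h_S}(\mathcal L_{\DP,\P}(k),T_{\acl})$, which I abbreviate to $\RDef_{h_S}$ below, and $\Cons_S$, using quantifier elimination in the residue field sort of $T_{\acl}$ as the dictionary between the two sides; I would treat $S$ affine first, the general case following by gluing over affine charts exactly as for $\SA_k(X)$ in Theorem \ref{equivalence}. The first step is to make the objects of $\RDef_{h_S}$ explicit. Both $h_S$ and $h_{\mathbb A_k^n}$ lie in the residue field sort, so an object $\mathsf X\subseteq h_{S\times_k\mathbb A_k^n}$ is cut out by an $\mathcal L_{\DP,\P}(k)$-formula $\varphi$ in residue-field free variables; after Denef--Pas elimination of valued field quantifiers, and using that in $T_{\acl}$ the residue field sort is stably embedded and orthogonal to the Presburger value group sort, $\varphi$ is equivalent on $h_{S\times_k\mathbb A_k^n}$ to a quantifier-free residue-field formula with coefficients in $k$. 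Since the residue field sort of $T_{\acl}$ is the theory of algebraically closed fields containing $k$ in the ring language enlarged by constants from $k$, such a formula is exactly the datum of a constructible subset $X\subseteq S\times_k\mathbb A_k^n$ defined over $k$; and because a constructible subset of an affine space over $k$ is determined by its $K$-points for $K$ running over algebraically closed fields containing $k$, the rule $\mathsf X\mapsto X$ is a well-defined bijection between objects of $\RDef_{h_S}$ and constructible subsets of the various $S\times_k\mathbb A_k^n$, intertwining the structure map to $h_S$ with the projection to $S$. Applying this to graphs, a definable function $\mathsf X\to\mathsf Y$ over $h_S$ is the same datum as a constructible morphism $X\to Y$ over $S$, which by the description of $\Cons_S$ in Section \ref{Prel} is exactly an arrow of $\Cons_S$.

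Granting this dictionary, I would let $F\colon\RDef_{h_S}\to\Cons_S$ send $(\mathsf X,\pr)$ to $(X\to S)$ on objects and a definable function to the associated constructible morphism on arrows; functoriality is immediate. For a quasi-inverse $G$, given $g\colon X\to S$ in $\Cons_S$ I would realize $X$ (affine-locally if need be) as a constructible subset of some $\mathbb A_k^m$ and then replace it by the graph $\Gamma_g\subseteq X\times_k S\cong S\times_k\mathbb A_k^m$ of $g$, which is constructible and $S$-isomorphic to $X$ via the first projection; the dictionary attaches to $\Gamma_g$ a definable $T_{\acl}$-subassignment $\mathsf Z_g\subseteq h_{S\times_k\mathbb A_k^m}$, and I would set $G(g\colon X\to S)$ to be $\mathsf Z_g$ with its projection to $h_S$, transporting arrows through graphs in the same way. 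Then $F\circ G$ carries $g\colon X\to S$ to $\Gamma_g\to S$, which is naturally $S$-isomorphic to $X\to S$, and $G\circ F$ carries $(\mathsf X,\pr)$ to the subassignment attached to the graph of $X\to S$, naturally isomorphic to $(\mathsf X,\pr)$ in $\RDef_{h_S}$; hence $F$ is an equivalence, which is the statement of Theorem \ref{Lem41}. The identity $K_0(\Cons_S)=K_0(\Var_S)$ recorded in the introduction is not needed here.

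The step I expect to be the main obstacle is the reduction, in the first paragraph, of an arbitrary $\mathcal L_{\DP,\P}(k)$-formula defining a residue-field subassignment to a quantifier-free residue-field formula over $k$: this rests on the precise model theory of $T_{\acl}$ in the chosen Denef--Pas language --- elimination of valued field quantifiers, together with stable embeddedness and orthogonality of the residue field and value group sorts --- which are the same ingredients underlying Theorem \ref{equivalence}, so I would quote them from there rather than reprove them. What remains is bookkeeping: treating objects and morphisms only up to isomorphism on both sides, handled uniformly by passing to graphs, and compatibility with the ground field $k$, handled by testing subassignments on algebraically closed fields containing $k$, both carried out exactly as in the proof of Theorem \ref{equivalence}.
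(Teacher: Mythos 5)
Your proposal is correct and follows essentially the same route as the paper: reduce via Denef--Pas elimination of valued-field quantifiers and the orthogonality of the residue-field and value-group sorts to quantifier-free residue-field formulas over $k$, identify these with constructible subsets of $S\times_k\mathbb A_k^n$ via Chevalley's theorem (quantifier elimination for algebraically closed fields), handle morphisms through their graphs, and check the quasi-inverse functors following the pattern of Theorem \ref{equivalence}. The paper's own proof states this in two sentences and defers the bookkeeping to \cite[Section 16.2]{CL} and to the proof of Theorem \ref{equivalence}, so your version simply unpacks the same argument in more detail.
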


This theorem has several interesting corollaries, such as the following isomorphism between Grothendieck rings $K_0(\RDef_{h_S}(\mathcal L_{\DP,\P}(k),T_{\acl}))\cong K_0(\Var_S)$. Thanks to this isomorphism we can identify the class $\L$ of the trivial line bundle $S\times_k\mathbb A_k^1\to S$ with the class $\big[h_{S\times_k\mathbb A_k^1}\to h_S\big]$. Moreover, if we put 
$$\mathbb A:=\mathbb Z\left[\L,\L^{-1},\frac{1}{1-\L^{-n}}\mid n\in \mathbb N^*\right],$$
we have $K_0(\RDef_k(\mathcal L_{\DP,\P}(k),T_{\acl}))\otimes_{\mathbb Z[\L]}\mathbb A\cong \mathscr M_{\loc}$. We also obtain the monodromic version $K_0^{\hat\mu}(\RDef_k(\mathcal L_{\DP,\P}(k),T_{\acl}))\cong K_0^{\hat\mu}(\Var_k)$ and $K_0^{\hat\mu}(\RDef_k(\mathcal L_{\DP,\P}(k),T_{\acl}))\otimes_{\mathbb Z[\L]}\mathbb A \cong \mathscr M_{\loc}^{\hat\mu}$. Here, $\mathscr M_{\loc}$ (resp. $\mathscr M_{\loc}^G$, with $G$ is either the group scheme $\mu_e$ or $\hat\mu=\varprojlim_e \mu_e$) is the localization of $K_0(\Var_k)$ (resp. $K_0^G(\Var_k)$) by inverting $\L$ and $\L^n-1$ for all $n$ in $\mathbb N^*$.

Theorem 10.1.1 of \cite{CL} implies that there is a unique functor from the category of definable subassignments to the category of abelian groups, $\mathsf X \mapsto \I C(\mathsf X)$, which assigns to $\mathsf X \to h_{\Spec k}$ a group morphism 
$$\mu: \I C(\mathsf X)\to \mathscr M_{\loc}$$ 
satisfying the axioms A0-A8 in that theorem. By \cite[Proposition 12.2.2]{CL}, if a definable subassignment $\mathsf X$ of $h[m,n,0]$ is bounded (see Section \ref{integrable}), the characteristic function $\mathbf 1_{\mathsf X}$ will be in $\I C(\mathsf X)$. In this case, $\mu(\mathsf X):=\mu(\mathbf 1_{\mathsf X})$ in $\mathscr M_{\loc}$ is the motivic measure of $\mathsf X$. When $\mathsf X$ is an invariant positively bounded definable subassignment of $h[m,n,0]$ we obtain the following comparison theorem (which also contains main results of the present paper).

\begin{thm}[Theorem \ref{comparison}, Proposition \ref{prop56}]
Let $\mathsf X$ be an invariant definable subassignment of $h[m,n,0]$ such that, for every $(x,y)$ on $\mathsf X$ with $x=(x_1,\dots,x_m)$, $\ord_t x_i\geq 0$ for all $1\leq i\leq m$. With the notions the morphism $\loc$ defined in Section \ref{Prel}, $\vol$ in Lemma \ref{defloc} and $\mathsf X[e]$ in the paragraph before Proposition \ref{prop55}, for $e\in \mathbb N^*$, the following identities hold:
\begin{align*}
\mu(\mathsf X)&=\loc(\vol(\mathsf X)) \qquad  \text{in}\ \ \mathscr M_{\loc},\\
\mu(\mathsf X[e])&=\loc(\vol(\mathsf X[e])) \quad  \text{in}\ \ \mathscr M_{\loc}^{\mu_e}.
\end{align*} 
\end{thm}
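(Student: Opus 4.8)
The plan is to reduce both identities to a cell-by-cell computation. First I would note that both sides of each identity are additive over finite partitions of $\mathsf X$ into definable subassignments: for $\mu$ this is part of the Cluckers--Loeser package, since $\mu$ factors through $\I C(\mathsf X)$ and is additive on characteristic functions of a partition; for $\loc(\vol(-))$ it follows from the scissor relations in the relevant Grothendieck ring together with the additivity of the arc-space volume $\vol$ from Lemma \ref{defloc}, and the fact that $\loc$ is a ring morphism. Hence, by the Denef--Pas cell decomposition theorem in the language $\mathcal L_{\DP,\P}(k)$, applied successively in the $m$ valued-field variables, it suffices to prove each identity when $\mathsf X$ is a single cell. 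The positivity hypothesis $\ord_t x_i\ge 0$ ensures that such a cell corresponds, via Theorem \ref{equivalence}, to a semi-algebraic subset of $h_{\mathbb A_k^m}\times h_{\mathbb A_k^n}$ sitting inside $k[\![t]\!]^m\times k^n$, fibered over a constructible set in the residue field sort and a Presburger subset of the value group sort; in particular both volumes are honest finite sums of classes, with no convergence issue in $\mathscr M_{\loc}$.

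Then I would compute on a cell. Write the cell with center $c=(c_1,\dots,c_m)$, orders $\ord(x_i-c_i)=\gamma_i$ running over a Presburger set, prescribed angular components $\overline{\ac}(x_i-c_i)=\eta_i$, and residue-field part $C_{\mathrm{res}}$ (depending on the parameters $\gamma,\eta$). On the one hand, the Cluckers--Loeser measure of the cell is the sum over $(\gamma,\eta)$ of $\L^{-\sum_i(\gamma_i+1)}$ times the class of $C_{\mathrm{res}}$ in $K_0(\RDef_k(\mathcal L_{\DP,\P}(k),T_{\acl}))$; this is the cell-integration formula of Cluckers--Loeser, the $\L^{-1}$ per variable coming from fixing the angular component. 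On the other hand, the corresponding subset of the arc space is, after the measure-preserving translation by the arc $c$, the set of $m$-tuples $(a_1,\dots,a_m)$ with $\ord a_i=\gamma_i$ and $\overline{\ac}\,a_i=\eta_i$, whose motivic volume is $\L^{-\sum_i(\gamma_i+1)}$ times the class of the same fiber; summing over $(\gamma,\eta)$ and applying $\loc$ recovers the previous expression. The match thus rests on two points: (i) the motivic volume of $\{a\in k[\![t]\!]:\ord a=\gamma,\ \overline{\ac}\,a=\eta\}$ equals $\L^{-\gamma-1}$ in both formalisms, a direct truncation computation; and (ii) the residue-field data on the two sides are identified compatibly, which is exactly the content of Theorem \ref{Lem41} through which $\loc$ is defined.

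For the equivariant identity I would carry the $\mu_e$-action through this computation. The subassignment $\mathsf X[e]$ (defined before Proposition \ref{prop55}) is obtained by adjoining $e$-th roots of the relevant angular-component data, equivalently by the ramified change of uniformizer $t=s^e$; this equips each residue-field piece with the $\mu_e$-action in which $\zeta\in\mu_e$ multiplies the adjoined roots. On the Cluckers--Loeser side this promotes the classes to $K_0^{\mu_e}(\RDef_k)$ and gives $\mu(\mathsf X[e])\in\mathscr M_{\loc}^{\mu_e}$; on the Denef--Loeser side, ramified base change is precisely the source of the monodromic classes in $K_0^{\mu_e}(\Var_k)$ and $\mathscr M_{\loc}^{\mu_e}$. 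Granting that these two $\mu_e$-structures on the common underlying residue-field pieces agree --- which is what Proposition \ref{prop56} records --- the cell computation above goes through verbatim with every class carrying its $\mu_e$-action, giving the second identity in $\mathscr M_{\loc}^{\mu_e}$.

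The main obstacle I anticipate is the bookkeeping that makes the two normalizations genuinely coincide: one must check that the $\L^{-1}$ per variable coming from fixing an angular component on the Cluckers--Loeser side is the same $\L^{-1}$ that arises from the fiber dimension of the truncation map on the Denef--Loeser side, and, for $\mathsf X[e]$, that the $\mu_e$-action produced by adjoining $e$-th roots in the Denef--Pas formalism corresponds, under the equivalence of Theorem \ref{Lem41}, to the monodromy action on the set of connected components of the ramified arc space. The invariance hypothesis on $\mathsf X$ is what guarantees that the cells occurring in the decomposition carry no extraneous angular-component dependence, so that the residue-field pieces are honest constructible sets and their only $\mu_e$-action is through the adjoined roots; once that is in place, this comparison of actions is the crux, the rest being the two explicit one-variable volume computations noted above.
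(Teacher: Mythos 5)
Your cell-decomposition strategy runs into a definitional problem on the right-hand side before the computation starts: $\vol$ is defined (Lemma \ref{defloc}) only for \emph{invariant}, positively bounded subassignments, via the constructible truncation $X_\beta$ with $\mathsf X=\pi_\beta^{-1}(h_{X_\beta})$, and the Denef--Pas cells you reduce to are not invariant. A cell prescribes $\ord_t(x_i-c_i)=\gamma_i$ and $\overline{\ac}(x_i-c_i)=\eta_i$ exactly; already $\{x:\ord_t x\geq 0,\ \overline{\ac}(x)=1\}$ fails $\gamma$-invariance for every $\gamma$ (take $a=t^{\gamma+1}$ and add $2t^{\gamma+1}$). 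So ``additivity of $\loc(\vol(-))$ over the cell partition'' does not follow from scissor relations plus Lemma \ref{defloc}: that lemma asserts only independence of $\beta$, not additivity, and $\vol$ is not even defined on the pieces. Moreover the partition of a cell into its leaves indexed by the Presburger set of $\gamma$'s is infinite, so your sum $\sum_{\gamma,\eta}\L^{-\sum_i(\gamma_i+1)}[\cdot]$ requires extending the arc-space volume to non-stable semi-algebraic sets and a countable-additivity/limit statement (Denef--Loeser's measurable sets, a priori valued in a completion) --- exactly the machinery the invariance hypothesis is there to avoid. That hypothesis is not, as you say at the end, about cells ``carrying no extraneous angular-component dependence''; it is what makes $\mathsf X$ literally a cylinder $\pi_\beta^{-1}(h_{X_\beta})$ over a constructible set, so that no decomposition and no infinite summation occur at all.

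The paper's proof is correspondingly much shorter: it factors the projection $\mathsf X\to h_{\Spec k}$ through $\pi_\beta:\mathsf X\to h_{X_\beta}$, writes $\pi_\beta$ as the inclusion recording the first $\beta$ coefficients as residue-field variables followed by the projection forgetting the valued-field variables, and applies Axiom A7 of \cite[Theorem 10.1.1]{CL} inductively to the ball $\ord_t x_i\geq\beta$, giving $\mu(\mathsf X)=\L^{-(\beta+1)m}\big[h_{X_\beta}\big]=\loc(\vol(\mathsf X))$ in one stroke. For the equivariant identity you also miss the actual content of Proposition \ref{prop56}: the hypothesis (suppressed in the combined statement but essential) that the defining formula $\varphi$ contains no $\overline{\ac}$ symbol. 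The $\mu_e$-action is the substitution $x(t)\mapsto x(\lambda t)$, and $h_{X_{\beta e}}$ is stable under it because $\varphi[e]$ has coefficients in $k[t^e]$ and no angular-component symbols; without that hypothesis $X_{\beta e}$ need not be $\mu_e$-stable and $\vol(\mathsf X[e])$ need not define a class in $\mathscr M_k^{\mu_e}$. Your ``adjoining $e$-th roots'' picture describes Denef--Loeser's action on $\{f(\gamma)\equiv t^e\}$ rather than this stability argument. Once stability is in hand, the second identity is just Theorem \ref{comparison} applied to the $\beta e$-invariant subassignment $\mathsf X[e]$, read in $\mathscr M_{\loc}^{\mu_e}$.
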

In fact, for $\mathsf X$ small in $h[m,0,0]$, the paper \cite{CL} showed early $\delta(\mu(\mathsf X))=\mu'(X)$ in $\widehat{\mathscr M}_k$, where $\widehat{\mathscr M}_k$ is a completion of $\mathscr M_k$ defined in \cite{DL2}, $\delta$ is the canonical morphism $\mathscr M_{\loc}\to \widehat{\mathscr M}_k$, $\mu'$ is the Denef-Loeser motivic volume defined in \cite{DL2}, and $X$ is the semi-algebraic subset of $\mathscr L(\mathbb A_k^m)$ corresponding to $\mathsf X$ via the equivalence of categories between $\SA_k(\mathbb A_k^m)$ and $\SDef_k(\mathbb A_k^m,\mathcal L_{\DP,\P}(k),T_{\acl})$ in Theorem \ref{equivalence}. 

In the end of this paper, we give a proof of the rationality of the series $\sum_{e\in \mathbb N^*}\mu(\mathsf X[e])T^e$ in $\mathscr M_{\loc}^{\hat\mu}[[T]]$ with $\mathsf X$ an invariant positively definable subassignment of $h[m,n,0]$.

\section{Grothendieck rings of varieties}\label{Prel}
Let $k$ be a field of characteristic zero, and $S$ an algebraic $k$-variety. As usual (cf. \cite{DL1, DL2}), we denote by $\Var_S$ the category of $S$-varieties and $K_0(\Var_S)$ its Grothendieck ring. By definition, $K_0(\Var_S)$ is the quotient of the free abelian group generated by the $S$-isomorphism classes $[X\to S]$ in $\Var_S$ modulo the following relation 
$$[X\to S]=[Y\to S]+[X\setminus Y\to S]$$ 
for $Y$ being Zariski closed in $X$. Together with fiber product over $S$, $K_0(\Var_X)$ is a commutative ring with unity $1=[\Id: S\to S]$. Put 
$$\L=[\mathbb A_k^1\times_k S\to S]$$ 
and write $\mathscr M_S$ for the localization of $K_0(\Var_S)$ inverting $\L$. Denote by $\mathscr M_{S,\loc}$ the localization of $\mathscr M_S$ inverting $\L^n-1$ for all $n$ in $\mathbb N^*$. 

Let $C\Var_S$ be the category whose objects are constructible morphisms from constructible sets over $k$ to $S$ with the set of morphisms between objects $X\to S$ and $Y\to S$ given by
$$\Mor_{C\Var_S}(X\to S,Y\to S):=K_0(\Var_{X\times_SY}).$$
In other words, a morphism from $X\to S$ to $Y\to S$ in $C\Var_S$ is a finite sum of elements of the form $[U\to X\times_SY]$ (in $K_0(\Var_{X\times_SY})$), with $U$ being an algebraic $k$-variety. The composition of two basic morphisms $[U\to X\times_SY]$ and $[V\to Y\times_SZ]$ is the following morphism
$$[V\to Y\times_SZ]\circ [U\to X\times_SY]:=[U\times_YV\to X\times_SZ].$$
This definition makes sense since the morphism $U\times_YV\to X\times_SZ$ commutes with the structural morphisms to $S$, and it can be also extended by additivity. Clearly, the identity morphism of $X$ in $C\Var_k$ is the class in $K_0(\Var_{X\times_kX}$ of the diagonal morphism $X\to X\times_kX$.

Denote by $\Cons_S$ the subcategory of $C\Var_S$ in which objects of $\Cons_S$ are objects of $C\Var_S$ and a morphism of $\Cons_S$ from $X\to S$ to $Y\to S$ is an element 
$$[(\Id_X,f):X\to X\times_S Y]$$ 
in $K_0(\Var_{X\times_SY})$. By definition, each morphism of $\Cons_S$ from $X\to S$ to $Y\to S$ is determined uniquely, up to $S$-automorphism on $X$, by the constructible $S$-morphism of constructible sets $f:X\to Y$, or alternatively, by the graph of such an $f$. Using the above definition of Grothendieck ring for the category $\Cons_S$ we get 
$$K_0(\Var_S)\cong K_0(\Cons_S).$$

Let $X$ be an algebraic $k$-variety, and let $G$ be an algebraic group which acts on $X$. The $G$-action is called {\it good} if every $G$-orbit is contained in an affine open subset of $X$. Now we fix a good action of $G$ on the $k$-variety $S$. By definition, the $G$-equivariant Grothendieck group $K_0^G(\Var_S)$ of $G$-equivariant morphisms of $k$-varieties $X\to S$, where $X$ is endowed with a good $G$-action, is the quotient of the free abelian group generated by the $G$-equivariant isomorphism classes $[X\to S,\sigma]$ modulo the following relations
$$[X\to S,\sigma]=[Y\to S,\sigma|_Y]+[X\setminus Y\to S,\sigma|_{X\setminus Y}]$$
for $Y$ being $\sigma$-stable Zariski closed in $X$, and
$$[X\times_k\mathbb A_k^n\to S,\sigma]=[X\times_k\mathbb A_k^n\to S,\sigma']$$
if $\sigma$ and $\sigma'$ lift the same $G$-action on $X$ to an affine action on $X\times\mathbb A_k^n$. As above, we have the commutative ring with unity structure on $K_0^G(\Var_S)$ by fiber product, where $G$-action on the fiber product is through the diagonal $G$-action, and we may define the localization $\mathscr M_S^G$ of the ring $K_0^G(\Var_S)$ by inverting $\L$. In this article, we also consider the localization $\mathscr M_{S,\loc}^G$ of $\mathscr M_S^G$ with respect to the multiplicative family generated by the elements $1-\L^{-n}$ with $n$ in $\mathbb N^*$. 

Since a constructible subset $X$ of a $k$-variety is a finite disjoint union of locally closed subsets, we can endow $X$ with good $G$-action via its locally closed subsets. A constructible morphism is $G$-equivariant if its graph admits a good $G$-action induced from the ones on its source and target. So we can define categories $C\Var_S^G$ and $\Cons_S^G$ as follows. As avove, fix a good $G$-action on the $k$-variety $S$. Objects of $C\Var_S^G$ are  $G$-equivariant constructible morphisms from constructible sets endowed with a good $G$-action to $S$ (over $k$), the set of morphisms between objects $X\to S$ and $Y\to S$ is 
$$\Mor_{C\Var_S^G}(X\to S,Y\to S):=K_0(\Var_{X\times_SY}^G).$$ 
Objects of $\Cons_S^G$ are objects of $C\Var_S^G$, and a morphism of $\Cons_S^G$ from $X\to S$ to $Y\to S$ is 
$$\left[(\Id,f):X\to X\times_SY\right],$$ 
where $f:X\to Y$ is a $G$-equivariant constructible $S$-morphism of constructible sets. Similarly as previous, we can define the $G$-equivariant Grothendieck ring $K_0(\Cons_S^G)$ in the usual way, and obtain a canonical isomorphism of rings 
$$K_0^G(\Var_S)\cong K_0(\Cons_S^G).$$

Let $\hat\mu$ be the group scheme of roots of unity, which is the projective limit of group schemes $\mu_n=\Spec k[t]/(t^n-1)$ together transitions $\mu_{mn}\to \mu_n$ induced by $\lambda\mapsto \lambda^m$. A good $\hat{\mu}$-action on an $S$-variety $X$ is a good $\mu_n$-action on the $S$-variety $X$ for some $n$ in $\mathbb N^*$. We define 
$$K_0^{\hat \mu}(\Var_S)=\varinjlim  K_0^{\mu_n}(\Var_S), \quad \mathscr M_S^{\hat \mu}=K_0^{\hat \mu}(\Var_S)\left[\L^{-1}\right],$$ 
and 
$$\mathscr M_{S,\loc}^{\hat \mu}=K_0^{\hat \mu}(\Var_S)\left[\L^{-1},(\L^n-1)^{-1}\right]_{n\in \mathbb N^*}.$$
Clearly, we have the identities 
$$\mathscr M_S^{\hat \mu}=\varinjlim  \mathscr M_S^{\mu_n}\quad \text{and}\quad \mathscr M_{S,\loc}^{\hat \mu}=\varinjlim  \mathscr M_{S,\loc}^{\mu_n}.$$ 
By abuse of notation, we shall write $\loc$ for any of the following localization morphisms $\mathscr M_S \to \mathscr M_{S,\loc}$, $\mathscr M_S^{\mu_n} \to \mathscr M_{S,\loc}^{\mu_n}$ and $\mathscr M_S^{\hat\mu} \to \mathscr M_{S,\loc}^{\hat\mu}$.

When $S$ is $\Spec k$, we shall write simply $\Var_k$, $\mathscr M_k$, $\mathscr M_k^{G}$, $\mathscr M_{\loc}$ and $\mathscr M_{\loc}^G$ instead of $\Var_{\Spec k}$, $\mathscr M_{\Spec k}$, $\mathscr M_{\Spec k}^{G}$, $\mathscr M_{\Spec k,\loc}$ and $\mathscr M_{\Spec k,\loc}^G$, respectively.

\section{Arc spaces and rational series}
\subsection{Arc spaces}\label{sect31}
Let $X$ be an algebraic $k$-variety. For $e\in\mathbb N^*$, let $\mathscr L_e(X)$ be the space of $e$-jet schemes of $X$, which is actually a $k$-scheme representing the functor sending a $k$-algebra $A$ to the set of morphisms of $k$-schemes 
$$\Spec(A[t]/(t^{e+1}))\to X.$$
Thus, the set of $A$-rational points of $\mathscr L_e(X)$ is naturally identified with the set of $A[t]/(t^{e+1})$-rational points of $X$.

For $d\geq e$ in $\mathbb N^*$, the truncation modulo $t^{e+1}$ induces an affine morphism of $k$-schemes 
$$\mathscr L_d(X)\to \mathscr L_e(X)$$
denoted by $\pi_{e}^d$. If $X$ is a smooth variety of dimension $d$, the morphism $\pi_{e}^d$ is a locally trivial fibration with fiber $\mathbb{A}_k^{(d-e)\dim_kX}$.

The above jet schemes $\mathscr L_e(X)$ and truncation morphisms $\pi_{e}^d$ form in a natural way a projective system of $k$-schemes. As the truncation morphisms are affine, the projective limit of this system exists in the category of $k$-schemes and is called \emph{arc space of $X$} and denoted by $\mathscr L(X)$ with truncation morphisms 
$$\pi_e :\mathscr{L}(X) \rightarrow \mathscr L_e(X).$$ 
If $k\subseteq K$ is a field extension of $k$, then the 
$K$-rational points of $\mathscr{L}(X)$ correspond one-to-one to the $K[\![t]\!]$-rational points of $X$.  

Recall from \cite[Section 2]{DL2}, for any algebraically closed field $K$ containing $k$, that a subset of $K(\!(t)\!)^m\times \mathbb Z^r$ is {\it semi-algebraic} if it is a finite boolean combination of sets of the forms
\begin{align}\label{eq31}
\left\{(x,\alpha)\in K(\!(t)\!)^m\times \mathbb Z^r \mid \ord_tf(x)\geq \ord_tg(x)+\ell(\alpha)\right\},
\end{align}
and
\begin{align}\label{eq32}
\left\{(x,\alpha)\in K(\!(t)\!)^m\times \mathbb Z^r \mid\ord_tf(x)\equiv \ell(\alpha)\mod n\right\},
\end{align}
and
\begin{align}\label{eq33}
\left\{(x,\alpha)\in K(\!(t)\!)^m\times \mathbb Z^r \mid\Phi(\overline{\ac}(f_1(x)),\dots,\overline{\ac}(f_p(x)))=0\right\},
\end{align}
where $f$, $g$, $f_i$ and $\Phi$ are $k$-polynomials, $\ell$ is a $\mathbb Z$-polynomial of degree at most 1, $n$ is in $\mathbb N$, and $\overline{\ac}(f_i(x))$ is the angular component of $f_i(x)$. One calls a collection of formulas defining a semi-algebraic set a {\it semi-algebraic condition}. A subset $A$ of $\mathscr L(X)$ is called {\it semi-algebraic} if there exists a covering of $X$ by affine Zariski open sets $U$ such that $A\cap \mathscr L(U)$ is of the form
\begin{align}\label{eq3.02}
A\cap \mathscr L(U)=\{x\in \mathscr L(U) \mid \theta(f_1(\tilde{x}),\dots,f_p(\tilde{x});\alpha)\},
\end{align}
where $f_i$ are regular functions on $U$, $\theta$ is a semi-algebraic condition, $\alpha$ may be a given tuple of integers or nothing, and $\tilde{x}$ is the element in $\mathscr L(U)(k(x))$ corresponding to a point $x$ in $\mathscr L(U)$ of residue field $k(x)$. 

By \cite{Pas}, if $g: X\to Y$ is a morphism of algebraic $k$-varieties and $A$ is a semi-algebraic subset of $\mathscr L(X)$, then $g(A)$ is a semi-algebraic subset of $Y$. Then the map $g: A\to g(A)$ is called a {\it semi-algebraic morphism} of semi-algebraic sets. More generally, let $A$ and $B$ be semi-algebraic subsets of $\mathscr L(X)$ and $\mathscr L(Y)$, the arc spaces of $k$-varieties $X$ and $Y$, respectively, and let $h: A\to B$ be a map. Then $h$ is called a {\it semi-algebraic morphism} if its graph is a semi-algebraic subset of $\mathscr L(X\times_kY)$. Denote by $\SA_k$ be the category whose objects are pairs $(A,\mathscr L(X))$, where $A$ is a semi-algebraic subset of the arc space $\mathscr L(X)$ of an algebraic $k$-variety $X$, and a morphism of $\SA_k$ between two objects $(A,\mathscr L(X))$ and $(B,\mathscr L(Y))$ is a semi-algebraic morphism of semi-algebraic sets $A\to B$. For a given $k$-variety $X$, we can consider the full subcategory $\SA_k(X)$ of $\SA_k$ consists of semi-algebraic subsets of $\mathscr L(X)$.

In the sense of \cite[Definition-Proposition 3.2]{DL2}, Denef-Loeser's motivic volume is defined on $\Ob\SA_k(X)$ the set of all the semi-algebraic subsets of $\mathscr L(X)$ with the reasonable properties. By \cite[Remark 16.3.2]{CL}, this motivic volume essentially takes values in $\mathscr M_{\loc}$. In the present article, we denote Denef-Loeser's motivic volume by $\mu'$ (the symbol $\mu$ will be devoted to indicate Cluckers-Loeser's motivic volume \cite{CL}).

\subsection{Rationality}
Let $\mathscr M$ be a commutative ring with unity containing $\L$ and $\L^{-1}$, and let $\mathscr M[[T]]$ be the set of formal power series in $T$ with coefficients in $\mathscr M$, which is a ring and also a $\mathscr M$-module with respect to usual operations for series. Denote by $\mathscr M[[T]]_{\sr}$ the submodule of $\mathscr M[[T]]$ generated by 1 and by finite products of terms 
$$\frac{\L^pT^q}{(1-\L^pT^q)}$$ 
for $(p,q)$ in $\mathbb{Z}\times\mathbb{N}^*$. An element of $\mathscr M[[T]]_{\sr}$ is called a {\it rational} series. By \cite{DL1}, there exists a unique $\mathscr M$-linear morphism 
$$\lim_{T\to\infty}: \mathscr M[[T]]_{\sr}\to \mathscr M$$ 
such that for any $(p,q)$ in $\mathbb{Z}\times\mathbb{N}^*$,
$$\lim_{T\to\infty}\frac{\L^pT^q}{(1-\L^pT^q)}=-1.$$ 
 
Let us recall some examples on the rationality. Let $X$ be a smooth algebraic $k$-variety of pure dimension $m$, and $f$ a regular function on $X$ with zero locus $X_0\not=\emptyset$. For $e$ in $\mathbb N^*$, put 
$$X[e]=\left\{\gamma \in \mathscr{L}_e(X)\mid f(\gamma)= t^e\mod t^{e+1}\right\},$$
which is naturally an $X_0$-variety and stable under the action $\lambda\cdot\gamma(t):=\gamma(\lambda t)$ of $\mu_e$ on $\mathscr L_e(X)$. 
Write simply $\big[X[e]\big]$ for the class $\big[X[e]\to X_0\big]$ in $\mathscr M_{X_0}^{\mu_e}$. It is proved in \cite{DL1} that the series
\begin{align*}
Z_f(T):=\sum_{e\in \mathbb N^*}\big[X[e]\big]\L^{-em}T^e, 
\end{align*}
is a rational series, i.e., in $\mathscr M_{X_0}^{\hat{\mu}}[[T]]_{\sr}$. More general, we can obtain the rationality of a series generalizing $Z_f(T)$ without assuming that $X$ is smooth, and with $f$ concerning several semi-algebraic subsets in $\mathscr L(X)$. Let $\mu'$ be Denef-Loeser's motivic volume defined in \cite{DL2}. The below theorem is a result given in \cite[Proposition 4.6]{LN2020}.

\begin{theorem}[L\^e-Nguyen \cite{LN2020}]\label{Thm34}
Let $X$ be a $k$-variety and $f$ a regular function on $X$. Let $A_{\alpha}$, $\alpha$ in $\mathbb N^r$, be a family of semi-algebraic subsets of $\mathscr L(X)$ such that there exists a covering of $X$ by affine Zariski open sets $U$ satisfying the condition that $A_{\alpha}\cap \mathscr L(U)$ are finite boolean combinations of sets of the forms (\ref{eq31}) and (\ref{eq32}). Assume that, for every $\alpha$ in $\mathbb N^r$, $A_{\alpha}$ is stable in the sense of \cite{DL2} and disjoint with $\mathscr L(X_{\text{Sing}})$. For $e\in \mathbb N^*$, we put 
$$A_{e,\alpha}:=\left\{\gamma\in A_{\alpha} \mid f(\gamma)=t^e\mod t^{e+1}\right\}.$$
Let $\Delta$ be a rational polyhedral convex cone in $\mathbb{R}^{r+1}_{\geq 0}$ and $\bar{\Delta}$ its closure. Let $\ell$ and $\ell'$ be integral linear forms on $\mathbb Z^{r+1}$ with $\ell(e,\alpha)>0$ and $\ell'(e,\alpha)\geq 0$ for all $(e,\alpha)$ in $\bar{\Delta}\setminus \{0\}$. Then the formal power series
$$Z(T):=\sum_{(e,\alpha)\in \Delta\cap \mathbb N^{r+1}}\mu'\left(A_{e,\alpha}\right)\L^{-\ell'(e,\alpha)}T^{\ell(e,\alpha)}$$
is an element of $\mathscr M_k^{\hat{\mu}}[[T]]_{\sr}$, and the limit $\lim_{T\to \infty}Z(T)$ is independent of such an $\ell$ and $\ell'$.
\end{theorem}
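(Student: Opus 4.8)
The plan is to deduce the rationality of $Z(T)$ from the standard rationality of lattice-point generating series over rational polyhedral cones, after passing to a log resolution and stratifying the arc space by exceptional contact orders. This is the strategy Denef--Loeser use for motivic zeta functions, adapted here to allow the extra congruence data \eqref{eq32}, the family parameter $\alpha$, and the cone $\Delta$ with its linear forms $\ell,\ell'$. Throughout I would use that finite $\mathscr M_k^{\hat\mu}$-linear combinations of elements of $\mathscr M_k^{\hat\mu}[[T]]_{\sr}$ stay in $\mathscr M_k^{\hat\mu}[[T]]_{\sr}$.

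First I would use the affine cover provided in the hypothesis together with the scissor (additivity) relations satisfied by $\mu'$ to reduce to the case $X$ affine with each $A_\alpha$ cut out by a single boolean combination of conditions \eqref{eq31}--\eqref{eq32} in finitely many $k$-polynomials $g_1,\dots,g_s$ and in $f$; an inclusion--exclusion then lets me treat finitely many ``basic'' pieces one at a time, keeping the same $\Delta$, $\ell$, $\ell'$. Then I would choose a log resolution $h\colon Y\to X$ that is an isomorphism over $X\setminus X_{\sing}$ and makes the total transform of $X_{\sing}\cup f^{-1}(0)\cup\bigcup_j\{g_j=0\}$ a simple normal crossings divisor $\bigcup_{i\in J}E_i$, recording $N_i=\ord_{E_i}(f\circ h)$, $m_{ij}=\ord_{E_i}(g_j\circ h)$ and $\lambda_i=1+\ord_{E_i}(\Jac\, h)$. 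Since each $A_\alpha$ is stable and disjoint from $\mathscr L(X_{\sing})$, the Denef--Loeser change-of-variables formula applies and expresses $\mu'(A_{e,\alpha})$ as a motivic integral over the pullback $A'_{e,\alpha}\subseteq\mathscr L(Y)$ with weight $\L^{-\ord_t\Jac\, h}$. Stratifying $\mathscr L(Y)$ by the contact vector $\mathbf{n}=(\ord_{E_i}\gamma)_{i\in J}\in\mathbb N^J$, the membership $\gamma\in A'_\alpha$ becomes a finite boolean combination of linear inequalities and congruences in $(\mathbf{n},\alpha)$, the relation $f(h(\gamma))=t^e\bmod t^{e+1}$ forces $e=\sum_i N_i n_i$ and constrains the leading angular component, and each nonempty stratum with $I=\{i:n_i>0\}$ contributes a term $[\widetilde{E}^{\circ}_{I}]\,(\L-1)^{|I|}\,\L^{-\kappa(\mathbf{n})}$ with $\kappa$ an integral linear form and $\widetilde{E}^{\circ}_{I}\to E^{\circ}_{I}$ the usual Galois cover obtained by adjoining $N_i$-th roots of units --- this is where the $\mu_e$-equivariant structure enters.

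Putting these together, $Z(T)$ rewrites as a finite sum $\sum_{I\subseteq J} c_I\sum_{(\mathbf{n},\alpha)\in\Delta_I}\L^{-L(\mathbf{n},\alpha)}T^{L'(\mathbf{n},\alpha)}$, where $c_I=[\widetilde{E}^{\circ}_{I}](\L-1)^{|I|-1}$, the index set $\Delta_I$ is a rational polyhedral cone in a positive orthant intersected with a finite union of lattice cosets (from the congruences) and with the preimage of $\Delta$ under $\mathbf{n}\mapsto(\sum_i N_i n_i,\alpha)$, and $L,L'$ are integral linear forms with $L'(\mathbf{n},\alpha)=\ell(\sum_i N_i n_i,\alpha)>0$ on $\Delta_I\setminus\{0\}$ because $\ell>0$ on $\bar\Delta\setminus\{0\}$. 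I would then invoke the standard lemma: triangulating a rational cone into simplicial subcones, decomposing each by its fundamental parallelepiped, and refining to the sublattice imposed by the congruences, any such series is a finite $\mathscr M_k^{\hat\mu}$-linear combination of products of terms $\frac{\L^pT^q}{1-\L^pT^q}$ with $q\in\mathbb N^*$, so that $Z(T)\in\mathscr M_k^{\hat\mu}[[T]]_{\sr}$. I expect the main obstacle to be carrying out this translation faithfully: checking that the change of variables genuinely applies (stability, disjointness from the arcs over $X_{\sing}$), computing the mixed strata with the correct $\mu_e$-action, and verifying that strict positivity of $\ell$ on $\bar\Delta\setminus\{0\}$ descends to the generators of the simplicial subcones, so the exponents $q$ produced are truly positive.

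Finally, for the independence of $\lim_{T\to\infty}Z(T)$ from $\ell$ and $\ell'$, I would apply the $\mathscr M_k^{\hat\mu}$-linear operator $\lim_{T\to\infty}$, which sends each $\frac{\L^pT^q}{1-\L^pT^q}$ to $-1$, and use the combinatorial identity that for a pointed rational simplicial cone $\sigma$ of dimension $d$ and integral linear forms $L,L'$ positive on $\sigma\setminus\{0\}$ one has $\lim_{T\to\infty}\sum_{0\neq n\in\sigma\cap\mathbb Z^d}\L^{-L(n)}T^{L'(n)}=(-1)^d$, a value independent of $L$ and $L'$ (proved via the parallelepiped decomposition and induction on the faces). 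Feeding this through the cone decomposition above collapses $\lim_{T\to\infty}Z(T)$ to a signed combination of the classes $[\widetilde{E}^{\circ}_{I}]$ with coefficients $(\L-1)^{|I|-1}$ and purely combinatorial signs determined only by the resolution and by $\Delta$, with no reference to $\ell$ or $\ell'$, which is exactly the asserted independence.
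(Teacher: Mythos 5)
The paper does not prove this theorem: it is cited verbatim from L\^e--Nguyen \cite{LN2020} (Proposition 4.6 there), so there is no ``paper's own proof'' to compare against. Your sketch, however, is exactly the Denef--Loeser strategy that one expects the cited proof to follow --- reduce by covering and additivity of $\mu'$, pass to a log resolution making $f^{-1}(0)\cup X_{\sing}\cup\{g_j=0\}$ an SNC divisor, apply the change-of-variables formula (legitimate because each $A_\alpha$ is stable and avoids $\mathscr L(X_{\sing})$), stratify arcs by contact orders $\mathbf n$, and reduce the rationality of $Z(T)$ to the rationality of lattice-point generating series over rational polyhedral cones refined by the congruence sublattices coming from the conditions (\ref{eq32}). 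You correctly identify the two delicate points: that the monodromic covers $\widetilde E_I^\circ$ carry the $\hat\mu$-structure, and that the positivity $\ell>0$ on $\bar\Delta\setminus\{0\}$ must be pushed through the simplicial subdivision to ensure the exponents $q$ in the produced terms $\L^pT^q/(1-\L^pT^q)$ are positive integers.

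One concrete inaccuracy: the ``standard lemma'' you state for the independence part is off. For a \emph{unimodular} simplicial cone $\sigma$ of dimension $d$, decomposing $\sigma\setminus\{0\}$ into the relative interiors of its nonzero faces gives $\lim_{T\to\infty}\sum_{0\neq n\in\sigma\cap\mathbb Z^d}\L^{-L(n)}T^{L'(n)}=\sum_{\emptyset\neq I\subseteq[d]}(-1)^{|I|}=-1$, not $(-1)^d$; the value $(-1)^d$ is what you get if you restrict to the \emph{interior} $\operatorname{relint}(\sigma)$, which is indeed the sum that arises on the stratum where all the coordinates $n_i$ with $i\in I$ are strictly positive. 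Since in your setup each $I\subseteq J$ indexes such an interior-type stratum, the contribution is $(-1)^{|I|}$ rather than $(-1)^d$, and the limit is then $\sum_I(-1)^{|I|}(\L-1)^{|I|-1}[\widetilde E_I^\circ]$ (up to the $\Delta$-cutoff), which depends only on the resolution and $\Delta$. The error is localized and does not undermine the overall argument, but as written the stated identity is false. Also note that for non-unimodular subcones the fundamental-parallelepiped factor must be absorbed into the limit calculation; this, too, is standard but deserves mention since you invoke ``triangulating ... decomposing each by its fundamental parallelepiped'' without following through.
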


\section{Categories of definable subassignments}\label{SS4}
In this section, we recall some concepts and results on motivic integration in the sense of Cluckers-Loeser \cite{CL}. We also provide an equivariant version for objects concerning definable $T_{\acl}$-subassignments, where $T_{\acl}$ is the theory of all algebraically closed fields containing $k$.

\subsection{Definable subassignments}\label{Sec4.1.1}
We consider the formalism of Cluckers and Loeser \cite{CL} with a concrete Denef-Pas language $\mathcal L_{\DP,\P}$ consisting of the ring language $\mathbf L_{\Rings}=\left\{+,-,\cdot,0,1\right\}$ for valued fields, also the ring language $\mathbf L_{\Rings}$ for residue fields, and the Presburger language $\mathbf L_{\PR}$ for value groups, where 
$$\mathbf L_{\PR}=\left\{+,-,0,1,\leq\right\}\cup \left\{\equiv_n\mid n\in \mathbb N^*\right\},$$ 
and $\equiv_n$ is the equivalence relation modulo $n$.

Let $\Field_k$ be the category of all fields $K$ containing $k$ whose morphisms are field morphisms. For any $K$ in $\Field_k$, we consider the natural valuation map $\ord_t: K(\!(t)\!)^{\times}\to \mathbb Z$ augmented by $\ord_t(0)=+\infty$, and the natural angular component map $\overline{\ac}: K(\!(t)\!)^{\times}\to K$ augmented by  $\overline{\ac}(0)=0$.

For a basic set 
$$V:=\mathbb A_{k(\!(t)\!)}^m\times \mathbb A_k^n\times \mathbb Z^r,$$ 
with $m$, $n$, $r$ in $\mathbb N$, we consider the functor $h_V$ (also denoted by $h[m,n,r]$) from $\Field_k$ to the category of sets defined by 
$$h_V(K)=h[m,n,r](K):=K(\!(t)\!)^m\times K^n\times\mathbb Z^r.$$ 
If $\mathsf X$ is a map sending each object $K$ of $\Field_k$ to a subset $\mathsf X(K)$ of $K(\!(t)\!)^m\times K^n\times\mathbb Z^r$, then $\mathsf X$ is called a {\it affine subassignment} (or, shortly speaking, {\it subassignment}) of $h_V=h[m,n,r]$. Note that $\mathsf X$ is not necessarily a subfunctor of $h[m,n,r]$. In the same way, we can define morphisms of subassignments and their graph, as well as union, subtraction, Cartesian product and fiber product of two subassignments. 

A subassignment $\mathsf X$ of $h[m,n,r]$ is called {\it definable} if there exists a formula $\varphi$ in $\mathcal L_{\DP,\P}$ with $k(\!(t)\!)$-coefficients and $m$ free variables in the valued field sort, $k$-coefficients and $n$ free variables in residue field sort, and $r$ free variables in the value group sort, such that, for any $K$ in $\Field_k$,
$$\mathsf X(K)=\left\{x\in K(\!(t)\!)^m\times K^n\times\mathbb Z^r \mid (K(\!(t)\!),K,\mathbb Z) \models \varphi(x)\right\}.$$
In this setting, we also write $h_{\varphi}$ for the definable subassignment $\mathsf X$. Denote by $\emptyset$ the empty definable subassignment, with $\emptyset(K)=\emptyset$ for any $K$ in $\Field_k$. For $\mathsf X$ and $\mathsf X'$ being definable subassignments of $h[m,n,r]$ and $h[m',n',r']$, respectively, a {\it definable} morphism $\mathsf X\to \mathsf X'$ is a morphism of subassignments $\mathsf X\to \mathsf X'$ such that its graph is a definable subassignment of $h[m+m',n+n',r+r']$. 

For a set 
$$W:=\mathcal X\times X\times\mathbb Z^r,$$ 
with $\mathcal X$ an algebraic $k(\!(t)\!)$-variety, and $X$ an algebraic $k$-variety, we define 
$$h_{W}(K):=\mathcal X(K(\!(t)\!))\times X(K)\times\mathbb Z^r,$$ 
for any $K$ in $\Field_k$. In general, we can define definable subassignments of $h_{W}$, definable morphisms of definable subassignments, and usual operations on definable subassignments of functors of the form $h_{W}$ using a glueing procedure (as in \cite[Section 2.3]{CL}): taking finite covers (always exist) of $\mathcal X$ and $X$ by affine open $k(\!(t)\!)$-subvarieties and $k$-subvarieties, respectively, going back to the definition of affine definable subassignment, and glueing them.  

We consider the category $\Def_k(\mathcal L_{\DP,\P})$ (or $\Def_k$ for short) of affine definable subassignments, where its objects are pairs $(\mathsf X,h[m,n,r])$ with $\mathsf X$ being a definable subassignment of $h[m,n,r]$, and a morphism 
$$(\mathsf X,h[m,n,r])\to (\mathsf X',h[m',n',r'])$$ 
in $\Def_k$ is a definable morphism $\mathsf X\to \mathsf X'$. We also consider the category $\GDef_k(\mathcal L_{\DP,\P})$ (or $\GDef_k$ for short) of global definable subassignments, where objects of $\GDef_k(\mathcal L_{\DP,\P})$ are pairs $(\mathsf X,h_{W})$ with $h_{W}$ as above and $\mathsf X$ being a definable subassignment of $h_{W}$, and a morphism 
$$(\mathsf X,h_W)\to (\mathsf X',h_{W'})$$ 
in $\GDef_k$ is a definable morphism $\mathsf X\to \mathsf X'$. For any affine definable subassignment $\mathsf S$, we denote by $\Def_{\mathsf S}(\mathcal L_{\DP,\P})$ (or $\Def_{\mathsf S}$ for short) the category of morphisms $\mathsf X\to \mathsf S$ in $\Def_k$, and a morphism in $\Def_{\mathsf S}$ between $\mathsf X\to \mathsf S$ and $\mathsf X'\to \mathsf S$ is a morphism $\mathsf X\to \mathsf X'$ in $\Def_k$ which is compatible with the morphisms to $\mathsf S$. For any definable subassignment $\mathsf S$, the category $\GDef_{\mathsf S}(\mathcal L_{\DP,\P})$ (or $\GDef_{\mathsf S}$ for short) can be defined in the same way as $\GDef_k$ with $h_{\Spec k}$ replaced by $\mathsf S$. 

When we consider the category $\Field_k(T_{\acl})$ of all algebraically closed fields containing $k$ in stead of $\Field_k$, and use the language $\mathcal L_{\DP,\P}(k)$ (see the definition in Section \ref{categories}) instead of $\mathcal L_{\DP,\P}$, we obtain corresponding notions of $T_{\acl}$-subassignment, $\Def_k(\mathcal L_{\DP,\P}(k),T_{\acl})$ and  $\GDef_k(\mathcal L_{\DP,\P}(k),T_{\acl})$.

\subsection{Points on definable subassignments}
Let $\mathsf X$ be an object in $\GDef_k$. A point $x$ on $\mathsf X$ is a tuple $x=(x_0,K)$ such that $K$ is in $\Field_k$ and $x_0$ is in $\mathsf X(K)$. For such a point $x$ on $\mathsf X$ we usually write $k(x)$ for $K$ and call it the residue field of $x$. Let 
$$f: \mathsf X \to \mathsf Y$$ 
be a morphism in $\Def_k$, with 
$$\mathsf X=(\mathsf X_0,h[m,n,r])$$ 
and 
$$\mathsf Y=(\mathsf Y_0,h[m',n',r']),$$ 
whose graph is defined by a formula $\varphi(x,y)$, where $x$ is in $h[m,n,r]$ and $y$ is in $h[m',n',r']$. One defines the fiber of $f$ over a point $y=(y_0,k(y))$ on $\mathsf Y$ to be the definable subsassignment $\mathsf X_y$ in $\Def_{k(y)}$ given by the formula $\varphi(x,y_0)$. In the category $\GDef_k$, fibers of a morphism are defined in the same way by using affine covers.

\subsection{Categories $\SDef_k(\mathcal L_{\DP,\P}(k),T_{\acl})$, $\SDef_k(X,\mathcal L_{\DP,\P}(k),T_{\acl})$, $\RDef_{h_S}(\mathcal L_{\DP,\P}(k),T_{\acl})$}\label{categories} \quad\\
Denote by $\mathcal L_{\DP,\P}(k)$ the language extending $\mathcal L_{\DP,\P}$ by adding constants in $k$ so that all polynomials in both valued field sort and residue field sort having coefficients in $k$. Let $X$ be an algebraic $k$-variety, let 
$$\mathcal X:=X\times_k\Spec k(\!(t)\!),$$ 
and $\mathsf A$ a definable subassignment of $h_{\mathcal X}$ defined by a formula in $\mathcal L_{\DP,\P}(k)$. Assume first $\mathcal X$ is a closed subscheme in $\mathbb A_{k(\!(t)\!)}^m$, for some $m$ in $\mathbb N$, such that the ideal defining $\mathcal X$ generated by polynomials with coefficients in $k[[t]]$. The above mentioned definable subassignment $\mathsf A$ (i.e., defined by a formula in $\mathcal L_{\DP,\P}(k)$) is called {\it small} if $\mathsf A$ is contained in the following definable subassignment 
$$\left\{(x_1,\dots,x_m)\in h[m,0,0] \mid \ord_tx_i\geq 0, 1\leq i\leq m\right\}.$$
For $\mathcal X$ not necessarily affine, we call $\mathsf A$ {\it small} if there exists a cover of $\mathcal X$ by open affine $k(\!(t)\!)$-subvarieties $\mathcal U_i$ defined by the vanishing of polynomials with coefficients in $k[[t]]$ such that $\mathsf A\cap h_{\mathcal U_i}$ are small for all $i$. Let $\SDef_k(\mathcal L_{\DP,\P}(k),T_{\acl})$ be the subcategory of $\GDef_k(\mathcal L_{\DP,\P},T_{\acl})$ whose objects are pairs 
$$(\mathsf A,h_{X\times_k\Spec k(\!(t)\!)}),$$ 
where $X$ is an algebraic $k$-variety and $\mathsf A$ is a small definable $T_{\acl}$-subassignment of $h_{X\times_k\Spec k(\!(t)\!)}$, and a morphism in $\SDef_k(\mathcal L_{\DP,\P}(k),T_{\acl})$ between objects 
$$(\mathsf A,h_{X\times_k\Spec k(\!(t)\!)})$$ 
and 
$$(\mathsf B,h_{Y\times_k\Spec k(\!(t)\!)})$$ 
is a $T_{\acl}$-morphism of $T_{\acl}$-subassignments $\mathsf A\to \mathsf B$ such that its graph is a small definable $T_{\acl}$-subassignment of $h_{X\times_kY\times_k\Spec k(\!(t)\!)}$.

Fixing an algebraic $k$-variety $X$, we define a category denoted by $\SDef_k(X,\mathcal L_{\DP,\P}(k),T_{\acl})$ which is the full subcategory of $\SDef_k(\mathcal L_{\DP,\P}(k),T_{\acl})$ whose objects contain all the small definable $T_{\acl}$-subassignments of $h_{X\times_k\Spec k(\!(t)\!)}$. 

\begin{theorem}\label{equivalence}
The categories $\SDef_k(\mathcal L_{\DP,\P}(k),T_{\acl})$ and $\SA_k$ are equivalent. If $X$ is an algebraic $k$-variety, then the categories $\SDef_k(X,\mathcal L_{\DP,\P}(k),T_{\acl})$ and $\SA_k(X)$ are equivalent.
\end{theorem}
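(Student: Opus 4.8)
### Proof Proposal

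\textbf{Overall strategy.} The plan is to construct an explicit functor in each direction and check they are mutually quasi-inverse. The key observation is that a point of $\mathscr L(X)$ with residue field $K$ is precisely a $K[\![t]\!]$-point of $X$, and a small definable $T_{\acl}$-subassignment of $h_{X\times_k\Spec k(\!(t)\!)}$ evaluated at an algebraically closed $K$ is, by the smallness condition $\ord_t x_i\geq 0$, a subset of $X(K[\![t]\!])\subseteq X(K(\!(t)\!))$. So on the level of $K$-points (for $K$ algebraically closed) the two sides are literally subsets of the same set; the content is to match the \emph{definability} conditions in $\mathcal L_{\DP,\P}(k)$ with the \emph{semi-algebraicity} conditions \eqref{eq31}--\eqref{eq33}, and to see that a subassignment is determined by its values on algebraically closed fields.

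\textbf{Step 1: From definable to semi-algebraic.} Given a small definable $T_{\acl}$-subassignment $\mathsf A$ of $h_{X\times_k\Spec k(\!(t)\!)}$, defined by a formula $\varphi$ in $\mathcal L_{\DP,\P}(k)$, I would first reduce to the affine case by the glueing procedure already recalled in Section~\ref{Sec4.1.1}, choosing the affine cover of $\mathcal X=X\times_k\Spec k(\!(t)\!)$ to come from an affine cover of $X$ over $k$ (using that the defining ideals can be taken with coefficients in $k[\![t]\!]$). Then I analyze the atomic subformulas of $\varphi$: those in the valued-field sort with the $\ord_t$ predicate give conditions of the form \eqref{eq31}, the Presburger congruences $\equiv_n$ pulled back along $\ord_t$ give \eqref{eq32}, and the residue-field-sort equations, expressed via $\overline{\ac}$, give \eqref{eq33}; quantifiers over the value group are eliminated by Presburger quantifier elimination, and quantifiers over the residue field and valued field are handled by the quantifier elimination available for the Denef--Pas language over $T_{\acl}$. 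This shows $\mathsf A(K)$, for $K$ algebraically closed containing $k$, is cut out inside $\mathscr L(X)(K)$ by a semi-algebraic condition, hence defines an object of $\SA_k(X)$; the associated $A\subseteq \mathscr L(X)$ is recovered as the locus where the condition holds on $K$-points for all such $K$. Definable morphisms go to semi-algebraic morphisms by the same analysis applied to their graphs.

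\textbf{Step 2: From semi-algebraic to definable, and quasi-inverseness.} Conversely, a semi-algebraic subset $A$ of $\mathscr L(X)$ is by definition \eqref{eq3.02} locally of the form $\theta(f_1(\tilde x),\dots,f_p(\tilde x);\alpha)$ for regular functions $f_i$ on an affine open $U$ and a semi-algebraic condition $\theta$; reading $\theta$ as a formula in $\mathcal L_{\DP,\P}(k)$ (the forms \eqref{eq31}--\eqref{eq33} are manifestly expressible) produces a small definable $T_{\acl}$-subassignment $\mathsf A$ with $\mathsf A(K)=A(K)$ for algebraically closed $K$. The two functors are mutually quasi-inverse because: (i) a small definable $T_{\acl}$-subassignment is determined by its restriction to $\Field_k(T_{\acl})$, indeed by the definition of $T_{\acl}$-subassignment it is only defined there; and (ii) two semi-algebraic conditions that agree on all algebraically closed $K$ define the same subset of $\mathscr L(X)$, since the $K$-points for $K$ algebraically closed are dense enough (every point of $\mathscr L(X)$ has a residue field, whose algebraic closure still gives a point in the same semi-algebraic set, and semi-algebraic sets are stable under such extension by the definition via formulas). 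The global statement (first sentence of the theorem) then follows by letting $X$ range and observing both categories are the union of the $\SA_k(X)$, resp. $\SDef_k(X,\dots)$, with morphisms between different $X,Y$ handled through $X\times_k Y$ exactly as in Section~\ref{sect31}.

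\textbf{Main obstacle.} The delicate point is Step~1, specifically quantifier elimination: one must be careful that the Denef--Pas quantifier-elimination result applies in the fibered/relative setting over an arbitrary $k$-variety $X$ and that eliminating valued-field quantifiers does not introduce conditions outside the three allowed forms \eqref{eq31}--\eqref{eq33}. A related subtlety is the compatibility of the glueing over $k(\!(t)\!)$-affine charts with the requirement, built into the definition of $\SA_k$, that the charts on the arc-space side come from $k$-affine charts of $X$; I would need the defining ideals of $\mathcal X$ to have coefficients in $k[\![t]\!]$, which is exactly the hypothesis imposed in the definition of smallness, so this should go through. I also expect a minor amount of care in matching Denef--Loeser's semi-algebraic morphisms (graphs in $\mathscr L(X\times_k Y)$) with definable morphisms (graphs in $h_{X\times_k Y\times_k\Spec k(\!(t)\!)}$), but this is formally the same statement for graphs as for subsets.
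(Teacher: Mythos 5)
Your proposal is correct and follows essentially the same route as the paper: construct explicit functors in both directions by matching an affine cover of $X$ over $k$ with the induced cover of $X\times_k\Spec k(\!(t)\!)$ over $k[[t]]$, use Denef--Pas (plus Presburger) quantifier elimination to translate $\mathcal L_{\DP,\P}(k)$-formulas into semi-algebraic conditions of the forms \eqref{eq31}--\eqref{eq33} and back, handle morphisms by passing to graphs, and get quasi-inverseness from the fact that the constructions are independent of the chosen cover. The paper likewise cites Corollary 2.1.2 of \cite{CL} for the quantifier-elimination step you flag as the main obstacle, and it verifies functoriality by factoring a morphism through the inclusion into its graph followed by a projection, which is a slightly more explicit version of your treatment of morphisms.
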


\begin{proof}
At the moment, for short, we write $(\mathsf A,X)$ instead of $(\mathsf A,h_{X\times_k\Spec k(\!(t)\!)})$ for an object of $\SDef_k(\mathcal L_{\DP,\P}(k),T_{\acl})$, and $(A,X)$ instead of $(A,\mathscr L(X))$ for an object in of $\SA_k$. 

First, let us construct a functor $\mathcal F$ from $\SDef_k(\mathcal L_{\DP,\P}(k),T_{\acl})$ to $\SA_k$. Let $(\mathsf A,X)$ be an object of $\SDef_k(X,\mathcal L_{\DP,\P}(k),T_{\acl})$. Then, there is a cover of $X\times_k\Spec k(\!(t)\!)$ by Zariski open affine $k(\!(t)\!)$-subvarieties $\mathcal U$, with $\mathcal U$ embedded as a closed $k(\!(t)\!)$-subvariety in some $\mathbb A_{k(\!(t)\!)}^m$ (we can take $m$ common for all $\mathcal U$) and the embedding defined over $k[[t]]$, such that for the standard coordinates $x_i$ of $h[m,0,0]$ and any point $x$ on $\mathsf A\cap h_{\mathcal U}$ we have $ \ord_tx_i(x)\geq 0$. 
Moreover, $\mathsf A\cap h_{\mathcal U}$ is defined by a formula $\varphi(\underline{x},\alpha)$ in the language $\mathcal L_{\DP,\P}(k)$, where $\underline{x}=(x_1,\dots,x_m)$ and $\alpha=(\alpha_1,\dots,\alpha_r)$ are free variables in value group sort, namely,
$$\mathsf A\cap h_{\mathcal U}=\left\{x\in h_{\mathcal U} \mid \varphi(x_1(x),\dots,x_m(x), \alpha)\right\}.$$
By Denef-Pas' quantifier elimination for algebraically closed fields (cf. Corollary 2.1.2 of \cite{CL}), $\varphi(\underline{x},\alpha)$ is equivalent to a finite disjunction of formulas of the form 
\begin{align}\label{fsmall}
\psi(\overline{\ac}g_1(\underline{x}),\dots,\overline{\ac}g_q(\underline{x}))\wedge \vartheta(\ord_t f_1(\underline{x}),\dots,\ord_t f_p(\underline{x}),\alpha),
\end{align}
where $f_i$ and $g_j$ are polynomials over $k$, $\psi$ is an $\mathbf L_{\Rings}$-formula with coefficients in $k$, and $\vartheta$ is an $\mathbf L_{\PR}$-formula. Thus, as seen in (\ref{eq31}), (\ref{eq32}) and (\ref{eq33}), the formula $\varphi$ is nothing but a semi-algebraic condition. 

Since all polynomials $f_i$ and $g_j$ have coefficients in $k$, in particular, polynomials defining $\mathcal U$ have coefficients in $k$, there exists a unique closed $k$-subvariety $U$ in $\mathbb A_k^m$ such that
\begin{align}\label{existU}
\mathcal U=U\times_k\Spec k(\!(t)\!),
\end{align}  
hence we have a cover $\{U\}_U$ of $X$ by Zariski open affine $k$-subvarieties. The above $x_i$ induce regular functions $x'_i$ on $U$ such that $x'_i$ are standard coordinate components in $\mathbb A_k^m$ for every $1\leq i\leq n$, and that $U$ is defined by the vanishing of $x'_i$ for $n+1\leq i\leq m$. Now we put 
$$A_U:=\left\{x\in \mathscr L(U)\mid \varphi(x'_1(\tilde{x}),\dots,x'_m(\tilde{x}),\alpha)\right\},$$
where $\tilde{x}$ is defined after (\ref{eq3.02}), and glue $A_U$'s into a semi-algebraic subset $A$ of $\mathscr L(X)$. Note that the construction of $A$ is up to semi-algebraic isomorphism independent of the choice of the cover $\{\mathcal U\}_{\mathcal U}$. Let us define 
$$\mathcal F(\mathsf A,X):=(A,X),$$ 
which is an object in $\SA_k$.

We shall construct a morphism $\mathcal F(\mathsf f)$ in $\SA_k$ which is the image under $\mathcal F$ of a morphism $\mathsf f$ in $\SDef_k(\mathcal L_{\DP,\P}(k),T_{\acl})$ such that 
\begin{align}\label{preservingg}
\mathcal F(\mathsf g\circ \mathsf f)=\mathcal F(\mathsf g)\circ \mathcal F(\mathsf f).
\end{align}
Consider a morphism $\mathsf f: (\mathsf A,X) \to (\mathsf B,Y)$, written $\mathsf f: \mathsf A \to \mathsf B$ for short, in $\SDef_k(\mathcal L_{\DP,\P}(k),T_{\acl})$. Then we can cover $X\times_kk(\!(t)\!)$ (resp. $Y\times_kk(\!(t)\!))$ by Zariski open affine $k(\!(t)\!)$-subvarieties $\mathcal U$ (resp. $\mathcal U'$) such that $\mathsf A\cap h_{\mathcal U}$ and $\mathsf B\cap h_{\mathcal U'}$ are defined by formulas, say $\psi_{\mathcal U}$ and $\phi_{\mathcal U'}$, respectively, which are disjunction of formulas of the form (\ref{fsmall}). We use the covering $\{\mathcal U\}$ of $X\times_kk(\!(t)\!)$ and the formulas $\psi_{\mathcal U}$ (resp. the covering $\{\mathcal U'\}$ of $Y\times_kk(\!(t)\!)$ and the formulas $\phi_{\mathcal U'}$) to construct a constructible subset $\mathcal A$ (resp. $\mathcal B$) of $X\times_kk(\!(t)\!)$ (resp. $Y\times_kk(\!(t)\!)$). The morphism $\mathsf f$ induces a morphism of constructible sets $\mathcal A\to \mathcal B$, hence with the same reason as the existence of $U$ in (\ref{existU}) the morphism $\mathcal A\to \mathcal B$ in its turn induces a semi-algebraic morphism of semi-algebraic sets $f: (A,X) \to (B,Y)$. So we define 
$$\mathcal F(\mathsf f):=f,$$
which is well defined and a morphism in $\SA_k$.

Since any morphism $\mathsf f$ can be factorized through the inclusion into its graph followed by a projection, to check the preserving property (\ref{preservingg}) it suffices to do for inclusions and projections of small definable $T_{\acl}$-subassignments. By definition, for the $\mathsf B$-projection 
$$\prsf_{\mathsf B}: (\mathsf A \times \mathsf B,X\times_k Y) \to (\mathsf B,Y)$$ 
we have 
$$\mathcal F(\prsf_{\mathsf B})=\pr_B: A\times B \to B,$$ 
the $B$-projection of semi-algebraic subset $A\times B$ of $\mathscr L(X\times_kY)=\mathscr L(X)\times_k\mathscr L(Y)$ onto $B$, which is a morphism in $\SA_k$. Also, for a morphism $\mathsf i_{\mathsf{AB}}: (\mathsf A,X)\to (\mathsf B,X)$ in $\SDef_k(\mathcal L_{\DP,\P}(k),T_{\acl})$ induced by an inclusion $\mathsf A \hookrightarrow \mathsf B$ of small definable $T_{\acl}$-subassignments, we have $\mathcal F(\mathsf i_{\mathsf{AB}})(x)=x$ for all $x\in A$. Now, let us consider  
$$\mathsf i: (\mathsf C,X\times_k Y) \to (\mathsf A\times \mathsf B, X\times_k Y),$$ 
which is a morphism in the category $\SDef_k(\mathcal L_{\DP,\P}(k),T_{\acl})$ induced by an inclusion $\mathsf C \hookrightarrow \mathsf A\times \mathsf B$. By definition, it is clear that  
$$\mathcal F(\prsf_{\mathsf B}\circ \mathsf i)=\mathcal F(\prsf_{\mathsf B})\circ \mathcal F(\mathsf i).$$ 
In the same way, for a morphism
$$\mathsf j: (\mathsf B,Y) \to (\mathsf E, Y)$$ 
in $\SDef_k(\mathcal L_{\DP,\P}(k))$ induced by an inclusion $\mathsf B \hookrightarrow \mathsf E$, we have 
$$\mathcal F(\mathsf j \circ \prsf_{\mathsf B})=\mathcal F(\mathsf j)\circ \mathcal F(\prsf_{\mathsf B}).$$

We now construct a functor $\mathcal G$ from $\SA_k$ to $\SDef_k(\mathcal L_{\DP,\P}(k),T_{\acl})$ which is naturally inverse to $\mathcal F$. Let $(A,X)$ be an object of $\SA_k$, i.e., $A$ is a semi-algebraic subset of $\mathscr L(X)$. By definition, there exist a cover of $X$ by Zariski open affine $k$-subvarieties $V$ (viewed as a closed $k$-subvariety of $\mathbb A_k^m$), and for each $V$, regular functions $h_i$ on $V$, $1\leq i\leq m$, a semi-algebraic condition $\varphi$ (with $h_i$ and $\varphi$ depending on $V$) such that 
$$A\cap \mathscr L(V)=\left\{x\in \mathscr L(V) \mid \varphi(h_1(\tilde{x}),\dots,h_m(\tilde{x}),\alpha)\right\}.$$ 
Clearly, 
$$\mathcal V:=V\times_k\Spec k(\!(t)\!)$$ 
is embedded over $k[[t]]$ into $\mathbb A_{k(\!(t)\!)}^m$, and they form a cover of $X\times_k\Spec k(\!(t)\!)$. Note that $\varphi$ is a formula in the language $\mathcal L_{\DP,\P}(k)$, and that each $h_i$ induces a definable morphism of definable subassignments 
$$x_i: h_{\mathcal V}\to h[1,0,0].$$ 
Put
$$\mathsf A_{\mathcal V}=\left\{x\in h_{\mathcal V}\mid \varphi(x_1(x),\dots,x_m(x),\alpha), \ord_tx_i(x)\geq 0, 1\leq i\leq m\right\},$$
and glue all $\mathsf A_{\mathcal V}$ along the cover $\{\mathcal V\}$ of $X\times_k\Spec k(\!(t)\!)$ to get a small definable $T_{\acl}$-subassignment $\mathsf A$ of $h_{X\times_k\Spec k(\!(t)\!)}$. We can prove that the construction of $\mathsf A$ is up to definable isomorphism independent of the choice of the cover $\{V\}$. So we can define 
$$\mathcal G(A,X)=(\mathsf A,X),$$ 
which is an object of $\SDef_k(\mathcal L_{\DP,\P}(k),T_{\acl})$. Similarly, we can define $\mathcal G(f)$ to be a morphism of $\SDef_k(\mathcal L_{\DP,\P}(k),T_{\acl})$ when $f$ is a morphism of $\SA_k$, which satisfies 
$$\mathcal G(f\circ g)=\mathcal G(f) \circ \mathcal G(g).$$
 
The existence of natural isomorphisms
$$\varepsilon: \mathcal F \circ \mathcal G \to \Id_{\SA_k}$$ 
and 
$$\eta: \Id_{\SDef_k(\mathcal L_{\DP,\P}(k),T_{\acl})} \to \mathcal G \circ \mathcal F$$ 
follows from the fact that the construction of $\mathsf A$ from $A$ and vice versa is independent of the choice of covers by open affine subvarieties. 
\end{proof}

Let $\mathsf S$ be an object in $\GDef_k(\mathcal L_{\DP,\P}(k),T_{\acl})$. Let $\RDef_{\mathsf S}(\mathcal L_{\DP,\P}(k),T_{\acl})$ denote the full subcategory of $\GDef_{\mathsf S}(\mathcal L_{\DP,\P}(k),T_{\acl})$ such that each object $\mathsf X\to \mathsf S$ of $\RDef_{\mathsf S}(\mathcal L_{\DP,\P}(k),T_{\acl})$ is the $\mathsf S$-projection of a definable subassignment $\mathsf X$ of $\mathsf S \times h_{\mathbb A_k^n}$, for some $n$ in $\mathbb N$. We first mention a special case when $\mathsf S=h_S$ as follows. Let $S$ be a closed $k$-subvariety of $\mathbb A_k^d$, for a given $d$ in $\mathbb N$. Then the category $\RDef_{h_S}(\mathcal L_{\DP,\P}(k),T_{\acl})$ defined previously is just the full subcategory of $\Def_{h_S}(\mathcal L_{\DP,\P}(k),T_{\acl})$, its objects $\mathsf X\to h_S$ are the $h_S$-projection of definable subassignments $\mathsf X$ of $h_{S\times_k\mathbb A_k^n}$, with $n$ being variable in $\mathbb N$. 

\begin{theorem}\label{Lem41}
For any $k$-variety $S$, the categories $\RDef_{h_S}(\mathcal L_{\DP,\P}(k),T_{\acl})$ and $\Cons_S$ are equivalent. 
\end{theorem}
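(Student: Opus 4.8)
The plan is to build mutually quasi-inverse functors
$$\Phi\colon \RDef_{h_S}(\mathcal L_{\DP,\P}(k),T_{\acl})\to \Cons_S,\qquad \Psi\colon \Cons_S\to \RDef_{h_S}(\mathcal L_{\DP,\P}(k),T_{\acl}),$$
the entire content being the dictionary between definable $T_{\acl}$-subassignments living purely in the residue field sort and constructible sets over $k$. Since both $h_{S\times_k\mathbb A_k^n}$ and its $k$-constructible analogues glue from affine pieces by the procedure of \cite[Section 2.3]{CL}, I would first assume $S$ affine, say closed in $\mathbb A_k^d$, and recover the general case at the very end by glueing over an affine open cover of $S$.

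\emph{Construction of $\Phi$.} Let $\mathsf X\to h_S$ be an object, with $\mathsf X$ a definable $T_{\acl}$-subassignment of $h_{S\times_k\mathbb A_k^n}\subseteq h[0,d+n,0]$ defined by an $\mathcal L_{\DP,\P}(k)$-formula $\varphi(s,\xi)$ whose free variables $s=(s_1,\dots,s_d)$, $\xi=(\xi_1,\dots,\xi_n)$ all lie in the residue field sort. By Denef-Pas' quantifier elimination for $T_{\acl}$ (\cite[Corollary 2.1.2]{CL}), combined with Presburger elimination in the value group sort, $\varphi$ is equivalent over all models of $T_{\acl}$ to a formula with no quantifiers and no terms of the valued field sort; as $\varphi$ has no free variable in the valued field or value group sort, every value-group subformula of the resulting disjunction is a sentence of the Presburger language and hence a constant, so what survives is a quantifier-free $\mathbf L_{\Rings}$-formula with coefficients in $k$ in the variables $(s,\xi)$. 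By Chevalley's theorem this defines a $k$-constructible subset $X$ of $S\times_k\mathbb A_k^n$, and two $k$-constructible sets with the same $K$-points for every algebraically closed $K\supseteq k$ coincide, so $X$ depends only on $\mathsf X$; set $\Phi(\mathsf X\to h_S):=(X\to S)$, with the projection as structural morphism. For a morphism $\mathsf f\colon \mathsf X\to\mathsf Y$ of $\RDef_{h_S}$, its graph is a definable $T_{\acl}$-subassignment of $\mathsf X\times_{h_S}\mathsf Y$, hence by the same argument a $k$-constructible subset of $X\times_SY$ which, being the graph of a function on $K$-points for every algebraically closed $K$, is the graph of a unique constructible $S$-morphism $f\colon X\to Y$; put $\Phi(\mathsf f):=[(\Id_X,f)\colon X\to X\times_SY]$, a morphism of $\Cons_S$. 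Functoriality reduces to noting that composition of definable morphisms is carried to composition of the associated constructible graphs, which by the definition of composition in $C\Var_S$ is exactly the fibre-product-of-graphs law of $\Cons_S$, and that the diagonal goes to the diagonal.

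\emph{Construction of $\Psi$ and the equivalence.} Conversely, let $(X\to S)$ be an object of $\Cons_S$; up to constructible isomorphism we may take $X$ a constructible subset of $\mathbb A_k^n$, so that the graph of $X\to S$ is a $k$-constructible subset of $S\times_k\mathbb A_k^n$. Choosing a quantifier-free $\mathbf L_{\Rings}$-formula with coefficients in $k$ that defines it and reading it in the residue field sort of $\mathcal L_{\DP,\P}(k)$, we obtain a definable $T_{\acl}$-subassignment $\mathsf X$ of $h_{S\times_k\mathbb A_k^n}$, and $\mathsf X\to h_S$ is an object of $\RDef_{h_S}(\mathcal L_{\DP,\P}(k),T_{\acl})$, independent up to definable isomorphism over $h_S$ of the chosen embedding. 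A morphism of $\Cons_S$, i.e.\ the graph of a constructible $S$-morphism $f\colon X\to Y$, likewise gives the graph of a definable $S$-morphism $\mathsf X\to\mathsf Y$; this defines $\Psi$ together with its functoriality. The two functors are quasi-inverse: on every algebraically closed $K$ the round trip $\Psi\Phi$ returns the very same set of points, so $\Psi\Phi(\mathsf X)\cong\mathsf X$ through an isomorphism coming only from re-embedding, while $\Phi\Psi(X\to S)=(X\to S)$ since a $k$-constructible set is recovered from its $K$-points; naturality of these isomorphisms is automatic, just as for $\varepsilon$ and $\eta$ in the proof of Theorem \ref{equivalence}. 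For $S$ not necessarily affine one glues the affine construction over an affine open cover of $S$, using the glueing of definable subassignments and of constructible morphisms.

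\emph{Main obstacle.} The only genuinely substantive step is the quantifier elimination: one must check carefully that an arbitrary $\mathcal L_{\DP,\P}(k)$-formula with free variables confined to the residue field sort collapses, over $T_{\acl}$, to a quantifier-free ring formula in those variables, with no residual value-group conditions. Everything else --- identifying morphisms on both sides through their graphs, independence of presentations, and the passage to non-affine $S$ --- is bookkeeping parallel to the argument already carried out for Theorem \ref{equivalence}.
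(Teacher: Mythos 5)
Your proposal is correct and follows essentially the same route as the paper: quantifier elimination in the Denef--Pas language over $T_{\acl}$ to reduce residue-sort formulas to quantifier-free ring formulas, Chevalley's theorem (i.e.\ ACF quantifier elimination) to identify these with constructible subsets of $S\times_k\mathbb A_k^n$, graphs to handle morphisms, and the glueing/bookkeeping strategy of Theorem \ref{equivalence}. The paper's own proof is just a terser sketch of exactly this argument, so your version simply supplies the details it defers.
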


\begin{proof}
Using the argument in \cite[Section 16.2]{CL}, under some elimination theorems, we have that objects of $\Def_k(\mathcal L_{\DP,\P}(k),T_{\acl})$ are defined by formulas without quantifiers in the Denef-Pas language $\mathcal L_{\DP,\P}$. Since Chevalley's constructibility theorem in algebraic geometry (over $k$) is nothing else than the quantifier elimination theorem for the theory of algebraically closed fields containing $k$, a formula defining a definable subassignment of $h_{S\times_k\mathbb A_k^n}$ defines a constructible subset of $S\times_k\mathbb A_k^n$, and via graph, a definable morphism of definable subassignments gives rise to a constructible morphism of constructible sets, and vice versa. For a detailed argument, we can use the strategy in the proof of Theorem \ref{equivalence}.
\end{proof}

\subsection{Actions}
Let $X$ be an algebraic $k$-variety, and $G$ an algebraic group over $k$. A {\it $G$-action} (or {\it $h_G$-action}) on $h_X$ is a definable morphism of definable subassignments 
$$h_{G\times_k X}\to h_X$$ 
such that the corresponding morphism of $k$-varieties
$$G\times_k X\to X$$ 
is a $G$-action on $X$. The $G$-action on $h_X$ is called {\it good} if the corresponding $G$-action on $X$ is good. In this setting, a definable morphism of definable subassignments 
$$h_X\to h_Y$$ 
is {\it $G$-equivariant} if the corresponding morphism of $k$-varieties 
$$X\to Y$$ 
is $G$-equivariant. By Theorem \ref{Lem41}, we can extend this definition of good $G$-action to that on any definable subassignment of $h[0,n,0]$, for $n$ in $\mathbb N$. Let $S$ be a closed $k$-subvariety of $\mathbb A_k^d$, and let $S$ be endowed with a good $G$-action. Denote by $\RDef_{h_S}^G(\mathcal L_{\DP,\P}(k),T_{\acl})$ the subcategory of $\GDef_{h_S}(\mathcal L_{\DP,\P}(k),T_{\acl})$ whose objects are $G$-equivariant definable $T_{\acl}$-morphisms of definable $T_{\acl}$-subassignments $\mathsf X\to h_S$, where $\mathsf X$ is a definable $T_{\acl}$-subassignment of $h_{S\times_k\mathbb A_k^n}$, for some $n$ in $\mathbb N$, and $\mathsf X$ is endowed with a good $G$-action, a morphism in $\RDef_{h_S}^G(\mathcal L_{\DP,\P}(k),T_{\acl})$ from an object $\mathsf X\to h_S$ to another one $\mathsf Y\to h_S$ is a $G$-equivariant definable $T_{\acl}$-morphism $\mathsf X \to \mathsf Y$ which commutes with the $G$-equivariant morphisms to $h_S$. 

\begin{lemma}\label{Lem42}
For any $k$-variety $S$, $\RDef_{h_S}^G(\mathcal L_{\DP,\P}(k),T_{\acl})$ and $\Cons_S^G$ are equivalent.
\end{lemma}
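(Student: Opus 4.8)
The plan is to reduce Lemma \ref{Lem42} to Theorem \ref{Lem41} by showing that adding the datum of a good $G$-action is compatible with the category equivalence already established. Recall that Theorem \ref{Lem41} supplies mutually inverse equivalences between $\RDef_{h_S}(\mathcal L_{\DP,\P}(k),T_{\acl})$ and $\Cons_S$ built from Denef-Pas quantifier elimination (so that a formula in $\mathcal L_{\DP,\P}(k)$ with only residue-field free variables over $h_{S\times_k\mathbb A_k^n}$ corresponds, via Chevalley, to a constructible subset of $S\times_k\mathbb A_k^n$) together with the fact that definable $T_{\acl}$-morphisms correspond, via their graphs, to constructible $S$-morphisms and vice versa. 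First I would recall the definition of a good $G$-action on an object of $\RDef_{h_S}^G(\mathcal L_{\DP,\P}(k),T_{\acl})$: by the paragraph preceding the lemma, it is a definable morphism $h_{G\times_k\mathsf X}\to \mathsf X$ whose underlying morphism of $k$-varieties (equivalently, of constructible sets, after writing $\mathsf X$ as a finite disjoint union of locally closed pieces) is a good $G$-action in the sense of Section \ref{Prel}. Thus the forgetful-type correspondence sends a $G$-equivariant object/morphism to a constructible set/morphism endowed with exactly the same good $G$-action.

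The key steps, in order, are as follows. First, observe that the equivalence $\mathcal F_S:\RDef_{h_S}\to\Cons_S$ of Theorem \ref{Lem41} is the identity on underlying constructible data: it takes $\mathsf X\to h_S$ (with $\mathsf X\subseteq h_{S\times_k\mathbb A_k^n}$) to the constructible set $X\subseteq S\times_k\mathbb A_k^n$ cut out by the same quantifier-free formula, and a definable $S$-morphism to the constructible $S$-morphism with the same graph. Second, since a $G$-action on $h_{\mathsf X}$ is by definition a definable morphism $h_{G\times_k\mathsf X}\to h_{\mathsf X}$ over $h_S$, applying $\mathcal F_{S\times_k G}$ (or rather $\mathcal F_S$ fibrewise over $G$, which makes sense because $G$ is an honest $k$-variety and $G\times_k\mathsf X\subseteq h_{G\times_k S\times_k\mathbb A_k^n}$) turns this into a constructible morphism $G\times_k X\to X$; the action axioms, being equalities of definable morphisms, transport to equalities of constructible morphisms, and goodness is preserved by hypothesis. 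Third, in the reverse direction, given $(X\to S,\sigma)$ in $\Cons_S^G$, the functor $\mathcal G_S$ of Theorem \ref{Lem41} produces $\mathsf X\to h_S$ and the action morphism $\sigma:G\times_k X\to X$ pulls back to a definable morphism $h_{G\times_k\mathsf X}\to h_{\mathsf X}$ realising a good $G$-action on $h_{\mathsf X}$; one checks that $G$-equivariance of a constructible $S$-morphism is equivalent (via graphs, which carry the diagonal action) to $G$-equivariance of the associated definable morphism. Fourth, the natural isomorphisms $\varepsilon$ and $\eta$ of Theorem \ref{Lem41} are already morphisms of the underlying constructible sets / definable subassignments, so it remains only to verify that they are $G$-equivariant; but this is automatic, since on both sides the $G$-action is the one carried along by the correspondence, and $\varepsilon,\eta$ are built from the same identifications of affine charts.

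I would present this as: construct $\mathcal F_S^G$ and $\mathcal G_S^G$ by decorating $\mathcal F_S$, $\mathcal G_S$ with the (transported) $G$-action, check functoriality on the nose using that $\mathcal F_S,\mathcal G_S$ respect composition and that actions/equivariance are purely equational conditions, and then invoke $\varepsilon,\eta$ from Theorem \ref{Lem41}, noting their $G$-equivariance. The main obstacle I expect is the bookkeeping around \emph{goodness}: a good $G$-action on a constructible set is defined via a stratification into locally closed $G$-stable pieces each lying in an affine open, and one must make sure that the affine charts $\{\mathcal U\}$, $\{V\}$ used in the proof of Theorem \ref{equivalence} / Theorem \ref{Lem41} can be chosen $G$-stable (or refined to $G$-stable ones) — this uses precisely the defining property of a good action, that every orbit sits in an affine open, so that such a cover exists. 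Once this compatibility of covers with the $G$-stratification is in hand, every remaining verification is a routine transcription of the non-equivariant argument, and no new model-theoretic input is needed beyond the quantifier elimination already used for Theorem \ref{Lem41}.
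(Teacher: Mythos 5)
Your proposal is correct and takes essentially the same route as the paper, which simply states that the lemma ``is deduced directly from Theorem~\ref{Lem41} and the definition of good $G$-action on definable subassignments.'' You have supplied the details that the paper leaves implicit, including the sensible remark that the affine covers used in the non-equivariant equivalence can be refined to $G$-stable ones precisely because the action is good.
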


\begin{proof}
The lemma is deduced directly from Theorem \ref{Lem41} and the definition of good $G$-action on definable subassignments.
\end{proof}

For an algebraic $k(\!(t)\!)$-variety $\mathcal X$, the definable subassignment $h_{\mathcal X}$ admits a natural $\mu_n$-action $h_{\mu_n}\times h_{\mathcal X} \to h_{\mathcal X}$ induced by 
$$(\lambda,t)\mapsto \lambda t,$$ 
for all $n$ in $\mathbb N^*$. More precisely, for every $K$ in $\Field_k$, $\lambda$ in $\mu_n(K)$ and $\varphi(t)$ in $\mathcal X(K(\!(t)\!))$, we have 
$$\lambda\cdot \varphi(t)=\varphi(\lambda t).$$ 
The profinite group scheme $\hat\mu$ acts naturally on $h_{\mathcal X}$ via $\mu_n$ for some $n$ in $\mathbb N^*$.

\subsection{Grothendieck semirings and rings of definable subassignments}
Let $\mathsf S$ be a definable subassignment. According to \cite{CL}, the Grothendieck semigroup $SK_0(\RDef_{\mathsf S})$ of the category $\RDef_{\mathsf S}$ is the quotient of the free abelian semigroup generated by symbols $[\mathsf X\to \mathsf S]$ with $\mathsf X\to \mathsf S$ being objects in $\RDef_{\mathsf S}$ modulo the following relations: 
$$[\emptyset\to \mathsf S]=0,$$ 
$$[\mathsf X\to \mathsf S]=[\mathsf Y\to \mathsf S]$$ 
if $\mathsf X\to \mathsf S$ and $\mathsf Y\to \mathsf S$ are isomorphic in $\RDef_{\mathsf S}$, and
$$[\mathsf X\cup \mathsf Y\to \mathsf S]+[\mathsf X\cap \mathsf Y\to \mathsf S]=[\mathsf X\to \mathsf S]+[\mathsf Y\to \mathsf S]$$ 
for definable subassignments $\mathsf X$ and $\mathsf Y$ of $\mathsf S\times h_{\mathbb A_k^n}$, for some $n$ in $\mathbb N$, and morphisms of $\mathsf X$ and $\mathsf Y$ to $\mathsf S$ factorizing through $\mathsf S$-projection. Denote by $K_0(\RDef_{\mathsf S})$ the group associated to the Grothendieck semigroup $SK_0(\RDef_{\mathsf S})$. If we provide $SK_0(\RDef_{\mathsf S})$ and $K_0(\RDef_{\mathsf S})$ with a product induced by the fiber product over $\mathsf S$ of morphisms of subassignments to $\mathsf S$ defined in Section 2.2 of \cite{CL}, then they are commutative semiring and ring with unity, respectively. Remark that the canonical morphism
$$SK_0(\RDef_{\mathsf S})\to K_0(\RDef_{\mathsf S})$$
is not necessarily injective.

Let $S$ be a $k$-variety endowed with a given $G$-action. 
The {\it $G$-equivariant Grothendieck group $K_0^G(\RDef_{h_S})$} is the quotient of the free abelian group generated by symbols 
$$[\mathsf X\to h_S,\sigma]$$ 
with $\mathsf X$ being a definable subassignment of $h_{S\times_k \mathbb A_k^n}$, for some $n$ in $\mathbb N$, endowed with a good $G$-action $\sigma$, and $\mathsf X \to h_S$ being a morphism in $\Def_k$, modulo the following relations
$$[\mathsf X \to h_S,\sigma]=[\mathsf Y\to h_S,\sigma']$$
if there exists a $G$-equivariant definable morphism $\mathsf X \to \mathsf Y$ which commutes with the definable morphisms to $h_S$,
$$[\mathsf X\to h_S,\sigma]=[\mathsf Y\to h_S,\sigma|_{\mathsf Y}]+[\mathsf X\setminus \mathsf Y\to h_S,\sigma|_{\mathsf X\setminus \mathsf Y}]$$
for $\mathsf Y$ being $\sigma$-stable definable subassignment of $\mathsf X$, and
$$[\mathsf X\times h_{\mathbb A_k^m}\to h_S,\sigma]=[\mathsf X\times h_{\mathbb A_k^m}\to h_S,\sigma']$$ 
if $\sigma$ and $\sigma'$ lift the same $G$-action on $\mathsf X$ to an affine action on $\mathsf X\times h_{\mathbb A_k^m}$, for any $m\geq 0$. As above, with respect to fiber product of subassignments endowed with diagonal $G$-action, the group $K_0^G(\RDef_{h_S})$ is a commutative rings with unity.

The Grothendieck rings $K_0(\RDef_{\mathsf S}(\mathcal L_{\DP,\P}(k),T_{\acl}))$ and $K_0(\RDef_{h_S}^G(\mathcal L_{\DP,\P}(k),T_{\acl}))$ of the categories $\RDef_{\mathsf S}(\mathcal L_{\DP,\P}(k),T_{\acl})$ and $\RDef_{h_S}^G(\mathcal L_{\DP,\P}(k),T_{\acl})$, respectively, are defined in the same way as previous.

\begin{lemma}
For any $k$-variety $S$, there are canonical isomorphisms
$$K_0(\RDef_{h_S}(\mathcal L_{\DP,\P}(k),T_{\acl}))\cong K_0(\Var_S)$$ 
and 
$$K_0(\RDef_{h_S}^G(\mathcal L_{\DP,\P}(k),T_{\acl}))\cong K_0^G(\Var_S).$$
\end{lemma}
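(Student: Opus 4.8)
The plan is to reduce the lemma to results already in hand. By Theorem~\ref{Lem41} there is an equivalence of categories
$$\mathcal H\colon \RDef_{h_S}(\mathcal L_{\DP,\P}(k),T_{\acl})\ \xrightarrow{\ \sim\ }\ \Cons_S,$$
and Section~\ref{Prel} records the isomorphism $K_0(\Var_S)\cong K_0(\Cons_S)$; likewise Lemma~\ref{Lem42} gives an equivalence $\RDef_{h_S}^G(\mathcal L_{\DP,\P}(k),T_{\acl})\simeq\Cons_S^G$ and Section~\ref{Prel} gives $K_0^G(\Var_S)\cong K_0(\Cons_S^G)$. So it suffices to show that $\mathcal H$, and its equivariant analogue, induce isomorphisms on Grothendieck rings. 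This is not automatic from an equivalence of categories alone: the Grothendieck ring of $\RDef_{h_S}$ is built from the union-intersection relation and the fiber product over $h_S$, while $K_0(\Cons_S)$ is built from the scissor relation and the fiber product over $S$, so one must see that $\mathcal H$ matches these data.

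First I would make the functor $\mathcal H$ explicit, as in the proof of Theorem~\ref{Lem41}: using that Chevalley constructibility is quantifier elimination for $T_{\acl}$, a definable $T_{\acl}$-subassignment $\mathsf X$ of $h_{S\times_k\mathbb A_k^n}$ is sent to the constructible subset $X\subseteq S\times_k\mathbb A_k^n$ defined by the same quantifier-free formula, and a definable $T_{\acl}$-morphism is sent to the constructible morphism with the corresponding graph. I would then check that for a fixed object this assignment $\mathsf X\mapsto X$ restricts to an isomorphism of the lattice of definable $T_{\acl}$-subassignments of $\mathsf X$ onto the lattice of constructible subsets of $X$: it carries $\emptyset$ to $\emptyset$, $\mathsf Y\cup\mathsf Z$ to $Y\cup Z$, $\mathsf Y\cap\mathsf Z$ to $Y\cap Z$, and, in the equivariant setting, $\sigma$-stable subassignments to $\sigma$-stable constructible subsets, since all of these are expressed by boolean combinations of the defining formulas. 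I would also check that $\mathcal H$ takes the fiber product over $h_S$ to the fiber product over $S$; this is immediate from the description of $h_W$ on objects and of fiber products of subassignments recalled in Section~\ref{SS4}, so the two multiplications agree as well.

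Granting these compatibilities, the map on generators $[\mathsf X\to h_S]\mapsto[X\to S]$ sends every defining relation of $K_0(\RDef_{h_S}(\mathcal L_{\DP,\P}(k),T_{\acl}))$---isomorphism in the category, $[\emptyset]=0$, $[\mathsf X\cup\mathsf Y]+[\mathsf X\cap\mathsf Y]=[\mathsf X]+[\mathsf Y]$, and multiplicativity---to a relation valid in $K_0(\Cons_S)$, and conversely; in particular the scissor relation $[X\to S]=[Y\to S]+[X\setminus Y\to S]$ for $Y$ Zariski closed in $X$ is exactly the image of the union-intersection relation for the disjoint pair $(Y,X\setminus Y)$ together with $[\emptyset]=0$. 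Hence $\mathcal H$ induces an isomorphism of Grothendieck semirings $SK_0(\RDef_{h_S}(\mathcal L_{\DP,\P}(k),T_{\acl}))\cong SK_0(\Cons_S)$, and therefore of the associated Grothendieck rings; composing with $K_0(\Cons_S)\cong K_0(\Var_S)$ yields the first isomorphism of the lemma, and it is canonical because the functor of Theorem~\ref{Lem41} was constructed canonically. For the equivariant statement one repeats the argument with Lemma~\ref{Lem42} and $\Cons_S^G$; the only extra point is the third relation of $K_0^G(\RDef_{h_S})$ identifying two affine lifts $\sigma,\sigma'$ of a $G$-action on $\mathsf X$ to $\mathsf X\times h_{\mathbb A_k^m}$, and this matches the analogous relation in $K_0^G(\Var_S)$ because, by the definition of good $G$-actions on definable subassignments given in the subsection on actions of Section~\ref{SS4}, such lifts are precisely the affine lifts of the corresponding $G$-action on the underlying $k$-variety $X\times_k\mathbb A_k^m$.

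The step I expect to be the crux is the lattice isomorphism between the definable $T_{\acl}$-subassignments of a fixed object and the constructible subsets of its image: this is where quantifier elimination for $T_{\acl}$ does the real work, since a definable subassignment is a priori only a set-valued map, and one needs its defining formula to be quantifier-free in order that boolean operations on subassignments translate literally into boolean operations on constructible sets. Once that is secured, the remainder is bookkeeping with the defining relations of the two Grothendieck rings.
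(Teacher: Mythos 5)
Your proof follows essentially the same route as the paper's: the paper disposes of this lemma in one line as a "direct corollary of Theorem \ref{Lem41} and Lemma \ref{Lem42}," and you reduce to exactly those two equivalences. The value you add is in spelling out the point the paper leaves implicit—namely that an equivalence of categories does not by itself transport the Grothendieck-ring relations, and that one must verify the functor respects the lattice of subobjects (so that the union--intersection relation on the subassignment side matches the scissor relation on the constructible side) and the fiber product over the base (so that the ring structures agree); since the equivalence of Theorem \ref{Lem41} is implemented by matching quantifier-free defining formulas under the $T_{\acl}$ quantifier-elimination/Chevalley dictionary, all of this holds, and your bookkeeping is correct, including the equivariant check of the affine-lift relation.
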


\begin{proof}
This statement is a direct corollary of Theorem \ref{Lem41} and Lemma \ref{Lem42}. We can also refer to \cite[Section 16.2]{CL} for a proof of the first isomorphism.
\end{proof}

\section{Integrable functions and measurable subassignments}\label{SS5}
\subsection{Rings of motivic functions and Functions}
Let $\mathsf S$ be a definable subassignment. Put 
$$\mathbb A:=\mathbb Z\left[\L,\L^{-1},\left(\frac{1}{1-\L^{-n}}\right)_{n\in \mathbb N^*}\right],$$
where by abuse of notation $\L$ also stands for the class of $\mathsf S\times h_{\mathbb A_k^1}$ in $K_0(\RDef_{\mathsf S})$. By \cite{CL}, for any real number $q>1$, there is a unique morphism of rings 
$$v_q: \mathbb A\to \mathbb R$$ 
sending $\L$ to $q$, and such that, whenever $q$ is transcendental, $v_q$ is injective. Denote by $\mathbb A_+$ the subset of $\mathbb A$ consisting of elements $a$ with $v_q(a)\geq 0$. 

We now recall Section 4.6 of \cite{CL}. Denote by $|\mathsf S|$ the set of points of $\mathsf S$. Let $\mathscr P(\mathsf S)$ be 
the subring of the ring of functions $|\mathsf S|\to\mathbb A$ which is generated by constant functions 
$$|\mathsf S|\to\mathbb A,$$ 
by functions 
$$\tilde{\alpha}:|\mathsf S|\to\mathbb Z,$$ 
and by functions 
$$\L^{\tilde{\beta}}: |\mathsf S|\to \mathbb A,$$ 
for definable morphisms $\alpha, \beta: \mathsf S\to h_{\mathbb Z}=h[0,0,1]$. Here, notice that to any definable morphism 
$$\alpha: \mathsf S\to h[0,0,1]$$ 
corresponds a function 
$$\tilde{\alpha}:|\mathsf S|\to\mathbb Z.$$ 
Denote by $\mathscr P_+(\mathsf S)$ the semiring of functions in $\mathscr P(\mathsf S)$ with values in $\mathbb A_+$. In particular, the ring $\mathscr P(h_{\Spec k})$ and the semiring $\mathscr P_+(h_{\Spec k})$ are nothing but $\mathbb A$ and $\mathbb A_+$, respectively. Denote by $\mathscr P^0(\mathsf S)$ the subring of $\mathscr P(\mathsf S)$ which is generated by $\L-1$ and by character functions $\mathbf 1_{\mathsf X}$ for all definable subassignments $\mathsf X$ of $\mathsf S$, and also define 
$$\mathscr P_+^0(\mathsf S):=\mathscr P^0(\mathsf S)\cap \mathscr P_+(\mathsf S).$$

According to \cite[Section 5.3]{CL}, the semiring $\mathscr C_+(\mathsf S)$ of {\it positive constructible motivic functions} on $\mathsf S$ and the ring $\mathscr C(\mathsf S)$ of {\it constructible motivic functions} on $\mathsf S$ are defined as follows
\begin{equation}\label{eq5.1}
\begin{aligned}
\mathscr C_+(\mathsf S)&:=SK_0(\RDef_{\mathsf S})\otimes_{\mathscr P_+^0(\mathsf S)}\mathscr P_+(\mathsf S),\\
\mathscr C(\mathsf S)&:=K_0(\RDef_{\mathsf S})\otimes_{\mathscr P^0(\mathsf S)}\mathscr P(\mathsf S).
\end{aligned}
\end{equation}
If $S$ is an algebraic $k$-variety endowed with a good $\hat\mu$-action, we define 
$$\mathscr C^{\hat\mu}(h_S):=K_0^{\hat\mu}(\RDef_{h_S})\otimes_{\mathscr P^0(h_S)}\mathscr P(h_S).$$

As mentioned in Section 16.1 of \cite{CL}, with respect to the language $\mathcal L_{\DP,\P}(k)$ and theory $T_{\acl}$ we can define rings $\mathscr P_+(\mathsf S,(\mathcal L_{\DP,\P}(k),T_{\acl}))$, $\mathscr P(\mathsf S,(\mathcal L_{\DP,\P}(k),T_{\acl}))$, $\mathscr C_+(\mathsf S,(\mathcal L_{\DP,\P}(k),T_{\acl}))$, and in the same way, the ring $\mathscr C^{\hat\mu}(h_S,(\mathcal L_{\DP,\P}(k),T_{\acl}))$.

In the rest of the article we will not work with the rings $\mathscr C(h_S)$, $\mathscr C(h_S,(\mathcal L_{\DP,\P}(k),T_{\acl}))$ and $\mathscr C^{\hat\mu}(h_S,(\mathcal L_{\DP,\P}(k),T_{\acl}))$ except the trivial case $S=\Spec k$. For this trivial case, we have the following lemma, which is is obvious from the definition.

\begin{lemma}\label{prepare1}
There exist canonical isomorphisms 
\begin{align*}
\mathscr C_+(h_{\Spec k})&\cong SK_0(\RDef_k)\otimes_{\mathbb N[\L-1]}\mathbb A_+,\\
\mathscr C(h_{\Spec k},(\mathcal L_{\DP,\P}(k),T_{\acl}))& \cong \mathscr M_{\loc},\\
\mathscr C^{\hat\mu}(h_{\Spec k},(\mathcal L_{\DP,\P}(k),T_{\acl}))& \cong \mathscr M_{\loc}^{\hat\mu}.
\end{align*}
\end{lemma}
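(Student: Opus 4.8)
The plan is to unwind the definitions: over the one-point base $h_{\Spec k}$ every coefficient ring occurring in (\ref{eq5.1}) degenerates to something elementary, and the statement then reduces to the Grothendieck ring identifications already proved together with the triviality that tensoring with $\mathbb A$ amounts to a localisation. First, for every $K$ in $\Field_k$ the set $h_{\Spec k}(K)$ is a singleton, so the only definable subassignments of $h_{\Spec k}$ are $\emptyset$ and $h_{\Spec k}$, and every character function on $h_{\Spec k}$ equals $0$ or $1$; hence $\mathscr P^0(h_{\Spec k})=\mathbb Z[\L-1]$ and $\mathscr P_+^0(h_{\Spec k})=\mathbb N[\L-1]$, while $\mathscr P(h_{\Spec k})=\mathbb A$ and $\mathscr P_+(h_{\Spec k})=\mathbb A_+$ as recalled above. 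Substituting these into (\ref{eq5.1}) and into the definition of $\mathscr C^{\hat\mu}$ yields the first isomorphism tautologically, and rewrites the other two as
\begin{align*}
\mathscr C(h_{\Spec k},(\mathcal L_{\DP,\P}(k),T_{\acl}))&\cong K_0(\RDef_k(\mathcal L_{\DP,\P}(k),T_{\acl}))\otimes_{\mathbb Z[\L-1]}\mathbb A,\\
\mathscr C^{\hat\mu}(h_{\Spec k},(\mathcal L_{\DP,\P}(k),T_{\acl}))&\cong K_0^{\hat\mu}(\RDef_k(\mathcal L_{\DP,\P}(k),T_{\acl}))\otimes_{\mathbb Z[\L-1]}\mathbb A.
\end{align*}

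Next I would invoke the canonical isomorphisms $K_0(\RDef_{h_S}(\mathcal L_{\DP,\P}(k),T_{\acl}))\cong K_0(\Var_S)$ and $K_0^{G}(\RDef_{h_S}(\mathcal L_{\DP,\P}(k),T_{\acl}))\cong K_0^{G}(\Var_S)$ proved above, taken at $S=\Spec k$ and $G=\mu_n$, together with $K_0^{\hat\mu}=\varinjlim_n K_0^{\mu_n}$ on both sides; this turns the two right-hand members into $K_0(\Var_k)\otimes_{\mathbb Z[\L-1]}\mathbb A$ and $K_0^{\hat\mu}(\Var_k)\otimes_{\mathbb Z[\L-1]}\mathbb A$. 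One should observe that under these identifications $\L$ acts through the class $[\mathbb A_k^1]$, so the $\mathbb Z[\L-1]$-algebra structures match the ones used to build $\mathscr M_{\loc}$ and $\mathscr M_{\loc}^{\hat\mu}$. Finally, since $1-\L^{-n}=(\L^n-1)\L^{-n}$, the ring $\mathbb A=\mathbb Z[\L,\L^{-1},(1-\L^{-n})^{-1}\mid n\in\mathbb N^*]$ is exactly the localisation $\Sigma^{-1}\mathbb Z[\L-1]$ of $\mathbb Z[\L-1]=\mathbb Z[\L]$ at the multiplicative subset $\Sigma$ generated by $\L$ and by $\L^n-1$, $n\in\mathbb N^*$. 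As tensoring with a localisation is again localisation, $K_0(\Var_k)\otimes_{\mathbb Z[\L]}\Sigma^{-1}\mathbb Z[\L]\cong\Sigma^{-1}K_0(\Var_k)=\mathscr M_{\loc}$ and likewise $K_0^{\hat\mu}(\Var_k)\otimes_{\mathbb Z[\L]}\Sigma^{-1}\mathbb Z[\L]\cong\mathscr M_{\loc}^{\hat\mu}$, where on the right $\Sigma$ is read through its image in the corresponding Grothendieck ring.

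I do not expect a genuine obstacle: granted the two Grothendieck ring identifications, the lemma is a purely formal manipulation of the defining tensor products. The only points that merit a line of care are the degeneration of $\mathscr P^0$, $\mathscr P_+^0$, $\mathscr P$ and $\mathscr P_+$ over $h_{\Spec k}$ caused by that base being a single point, and the verification that $\mathbb Z[\L]\to K_0(\Var_k)$ sends $\Sigma$ onto precisely the multiplicative set inverted in the definition of $\mathscr M_{\loc}$ (and of $\mathscr M_{\loc}^{\hat\mu}$); with these settled the three isomorphisms drop out of the standard fact that localisation commutes with tensor products.
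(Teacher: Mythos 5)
Your proposal is correct and is exactly the definition-unwinding the paper has in mind (the paper offers no written argument beyond declaring the lemma ``obvious from the definition''). The only point deserving a word of care is your claim that the only definable subassignments of $h_{\Spec k}$ are $\emptyset$ and $h_{\Spec k}$: for the $T_{\acl}$-versions this is clear, since after quantifier elimination every sentence is decided in all models, whereas for the first isomorphism, taken over all of $\Field_k$, one should either note that any extra characteristic functions are absorbed into $SK_0(\RDef_k)$ through the algebra map $\mathscr P_+^0\to SK_0(\RDef_k)$, or simply cite the identity $\mathscr C_+(h_{\Spec k})\cong SK_0(\RDef_k)\otimes_{\mathbb N[\L-1]}\mathbb A_+$ recorded by Cluckers--Loeser themselves.
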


The important properties of $\mathscr C_+(\mathsf S)$ and $\mathscr C(\mathsf S)$ are given in Section 5 of \cite{CL}.

According to \cite[Section 3]{CL}, the K-dimension of a definable subassignment (with K a capital letter not a mathematical notation) is defined as follows. If $\mathsf S$ is a definable subassignment of $h_{\mathcal X}$, with $\mathcal X$ being an algebraic $k(\!(t)\!)$-variety, then the K-{\it dimension} of $\mathsf S$, denoted by $\text{Kdim} \mathsf S$, is the dimension of the $k(\!(t)\!)$-variety which is the intersection of all $k(\!(t)\!)$-subvarieties $\mathcal Y$ of $\mathcal X$ with $h_{\mathcal Y}$ containing $\mathsf S$. It may happen that the intersection is empty; in that case, we define 
$$\text{Kdim}\mathsf S:=-\infty.$$ 
If $\mathsf S$ is a definable subassignment of $h_{\mathcal X\times X\times \mathbb Z^r}$, with $\mathcal X$ as above and $X$ being a $k$-variety, then we define
$$\text{Kdim}\mathsf S:=\text{Kdim}\prsf_1(\mathsf S),$$ 
where 
$$\prsf_1:h_{\mathcal X\times X\times \mathbb Z^r}\to h_{\mathcal X}$$ 
is the first projection. 

A positive constructible motivic function $\varphi$ in $\mathscr C_+(\mathsf S)$ is called of K-dimension $\leq d$ if $\varphi$ is a finite sum $\sum_i\alpha_i\mathbf 1_{\mathsf S_i}$ in $\mathscr C_+(\mathsf S)$ such that the K-dimension of every $\mathsf S_i$ is $\leq d$. Let $\mathscr C_+^{\leq d}(\mathsf S)$ be the sub-semigroup of $\mathscr C_+(\mathsf S)$ of elements of K-dimension $\leq d$, and  
$$C_+^d(\mathsf S):=\mathscr C_+^{\leq d}(\mathsf S)/\mathscr C_+^{\leq d-1}(\mathsf S)$$
and 
$$C_+(\mathsf S):=\bigoplus_{d\geq 0}C_+^d(\mathsf S).$$ 
An element in $C_+(\mathsf S)$ is called a {\it positive constructible motivic Function} on $\mathsf S$ (with the capital letter F). Clearly, $C_+(\mathsf S)$ is a graded abelian semigroup and has a module structure over the semiring $\mathscr C_+(\mathsf S)$ (cf. \cite[Section 6]{CL}).


\subsection{Integrable positive Functions and measurable subassignments} \label{integrable}
Let $\mathsf S$ be in $\Def_k$. By \cite[Theorem 10.1.1]{CL}, there exists a unique functor $\I_{\mathsf S}C_+$ from the category $\Def_{\mathsf S}$ to the category of abelian semigroups which sends every morphism 
$$\mathsf f:\mathsf X \to \mathsf Y$$ 
in $\Def_{\mathsf S}$ to a morphism of semigroups 
$$\mathsf f_!: \I_{\mathsf S}C_+(\mathsf X) \to \I_{\mathsf S}C_+(\mathsf Y)$$ 
and satisfies the axioms A0--A8. If $\mathsf S=h_{\Spec k}$ we write $\I C_+(\mathsf X)$ instead of $\I_{\mathsf S}C_+(\mathsf X)$, and call it the semigroup of integrable positive Functions on $\mathsf X$. By Proposition 12.2.2 of \cite{CL}, if $\mathsf X$ is a definable subassignment of $h[m,n,0]$ which is {\it bounded}, i.e., there exists an $s\in \mathbb N$ such that $\mathsf X$ is contained in the subassignment of $h[m,n,0]$ defined by 
$$\ord_t x_i\geq -s$$ 
for all $1\leq i\leq m$, then $[\mathbf 1_{\mathsf X}]$ belongs to $\I C_+(\mathsf X)$, where $\mathbf 1_{\mathsf X}$ is the characteristic function on $\mathsf X$. (In the previous definition of boundedness, if $s=0$ then $\mathsf X$ is said to be {\it positively bounded}.) 

Also in the trivial case $\mathsf S=h_{\Spec k}$, let us take $\mathsf f$ to be the projection of $\mathsf X$ onto the final subassignment $h_{\Spec k}$ of $\Def_k$. We denote by $\widetilde\mu$ the morphism of semigroups $\mathsf f_!$, namely,
$$\widetilde \mu: \I C_+(\mathsf X)\to \I C_+(h_{\Spec k})\cong \mathscr C_+(h_{\Spec k}).$$
Applying Section 16.1 of \cite{CL} we have a canonical morphism of rings 
$$\mathscr C_+(h_{\Spec k})\to \mathscr C_+(h_{\Spec k},(\mathcal L_{\DP,\P}(k),T_{\acl})).$$
On the other hand, we also have another canonical morphism
$$\mathscr C_+(h_{\Spec k},(\mathcal L_{\DP,\P}(k),T_{\acl}))\to \mathscr C(h_{\Spec k},(\mathcal L_{\DP,\P}(k),T_{\acl}))\cong \mathscr M_{\loc}.$$
Taking the composition of the last two morphisms with $\widetilde\mu$ we get a morphism of rings
$$\mu: \I C_+(\mathsf X)\to \mathscr M_{\loc}.$$

As mentioned previously, if $\mathsf X$ is a bounded definable subassignment in $\Def_k$, then $[\mathbf 1_{\mathsf X}]$ is in $\I C_+(\mathsf X)$. In this case, we call $\mathsf X$ a {\it motivically measurable} (definable) subaasignment. We define the {\it motivic measure} of $\mathsf X$ to be 
$$\mu(\mathsf X):=\mu([\mathbf 1_{\mathsf X}]),$$ 
which lies in $\mathscr M_{\loc}$. By the additivity of the integral (Axiom A2 in \cite[Theorem 10.1.1]{CL}), the motivic measure $\mu$ is additive on bounded definable subassignments.

Denote by $\widehat{\mathscr M}_k$ the completion of $\mathscr M_k$ in the sense of \cite{DL2}, and by $\delta$ the canonical morphism $\mathscr M_{\loc}\to \widehat{\mathscr M}_k$ defined by the expansion of $1-\L^{-n}$, for every $n$ in $\mathbb N^*$.

\begin{proposition}
Let $X$ be an algebraic $k$-variety, $A$ a semi-algebraic subset of $\mathscr L(X)$, and $\mathsf A$ the small definable subassignment corresponding to $A$ via the equivalence of categories between $\SA_k(X)$ and $\SDef_k(X,(\mathcal L_{\DP,\P}(k),T_{\acl}))$ in Theorem \ref{equivalence}. Then 
$$\delta(\mu(\mathsf A))=\mu'(A),$$ 
where $\mu'$ is Denef-Loeser's motivic volume defined in \cite{DL2}. 
\end{proposition}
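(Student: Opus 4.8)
The plan is to reduce the identity to the affine case, which is the comparison already established in \cite[Section 16.3]{CL} and recalled in the introduction, by transporting it along the equivalence of Theorem \ref{equivalence} and using additivity of both motivic measures. First I would fix a finite cover $X=\bigcup_{i=1}^N U_i$ by affine open $k$-subvarieties; since $X$ is separated, each finite intersection is again affine, and since the only open subset of $\Spec K[\![t]\!]$ containing its closed point is the whole space, every arc on $X$ factors through some $U_i$, so $\mathscr L(X)=\bigcup_i\mathscr L(U_i)$. Setting $A_i:=A\cap\mathscr L(U_i)$ and $B_i:=A_i\setminus\bigcup_{j<i}A_j$ gives a partition $A=\bigsqcup_{i=1}^N B_i$ into semi-algebraic sets with $B_i\subseteq\mathscr L(U_i)$. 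The explicit gluing construction in the proof of Theorem \ref{equivalence} shows that the functor there is compatible with $A\mapsto A\cap\mathscr L(U_i)$, so $B_i$ corresponds to a small definable subassignment $\mathsf B_i$ with $\mathbf 1_{\mathsf B_i}=\mathbf 1_{\mathsf A}\cdot\mathbf 1_{h_{\mathcal U_i}}\cdot\prod_{j<i}\bigl(1-\mathbf 1_{h_{\mathcal U_j}}\bigr)$, where $\mathcal U_i:=U_i\times_k\Spec k(\!(t)\!)$. By additivity of Denef--Loeser's volume \cite{DL2}, by additivity of the Cluckers--Loeser measure (Axiom A2 of \cite[Theorem 10.1.1]{CL}), and by the fact that $\mu'$ restricted to semi-algebraic subsets of $\mathscr L(U_i)$ is computed on $\mathscr L(U_i)$, one gets $\mu(\mathsf A)=\sum_i\mu(\mathsf B_i)$ and $\mu'(A)=\sum_i\mu'(B_i)$; so it suffices to treat each pair $(\mathsf B_i,B_i)$, i.e.\ we may assume $X$ affine.

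With $X$ affine I would embed it as a closed $k$-subvariety of some $\mathbb A_k^m$, exactly as in the proof of Theorem \ref{equivalence}, so that $\mathsf A$ becomes a small definable $T_{\acl}$-subassignment of $h[m,0,0]$ and $A$ the corresponding semi-algebraic subset of $\mathscr L(\mathbb A_k^m)$ (the same underlying set as $A\subseteq\mathscr L(X)$). The identity $\delta(\mu(\mathsf A))=\mu'(A)$ is then precisely the case proved in \cite[Section 16.3]{CL} for small subassignments of $h[m,0,0]$. To finish one invokes that Denef--Loeser's motivic volume of a semi-algebraic set does not depend on which variety's arc space it is presented in, so the value computed in $\mathscr L(\mathbb A_k^m)$ agrees with $\mu'(A)$ computed in $\mathscr L(X)$.

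The genuinely substantive input -- the agreement of the two integration formalisms on balls and cells -- is supplied by \cite[Section 16.3]{CL}; the work here is bookkeeping, and that is where the obstacle lies. One must check (i) that the equivalence of Theorem \ref{equivalence} really intertwines the restriction operations $A\mapsto A\cap\mathscr L(U)$ with $\mathsf A\mapsto\mathsf A\cap h_{\mathcal U}$, which is immediate from the gluing construction but needs to be phrased carefully because the functors $\mathcal F$ and $\mathcal G$ are only defined up to isomorphism; (ii) that $\delta$ is compatible with the additive manipulations, which is automatic since $\delta$ is a ring morphism; and (iii) the ambient-independence of $\mu'$ used above. The most delicate point is (iii), essentially the matching of normalizations; if one prefers not to use it as a black box, the alternative is a Noetherian induction on $\dim X$, stratifying $X$ into its smooth locus $X_{\mathrm{sm}}$ and the closed set $X\setminus X_{\mathrm{sm}}$, applying the inductive hypothesis to the latter (a variety of strictly smaller dimension) and applying the \cite{CL} comparison on charts where $X_{\mathrm{sm}}$ is étale over $\mathbb A_k^{\dim X}$, where the $\dim X$-normalization built into Denef--Loeser's volume matches the $\mathrm{K}$-dimension grading of the Cluckers--Loeser integral of a small subassignment of $\mathrm{K}$-dimension $\dim X$.
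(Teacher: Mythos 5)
Your reduction to the affine case (covering $X$ by affine opens, using that every arc factors through a chart, partitioning $A$, and invoking additivity of both $\mu$ and $\mu'$) is sound, but it does not reach the actual content of the proposition, and the step you then rely on is false as stated. Denef--Loeser's volume is \emph{not} independent of the ambient arc space: for $X$ a closed subvariety of $\mathbb A_k^m$ of dimension $d$, the volume of a semi-algebraic $A\subseteq\mathscr L(X)$ is defined in \cite{DL2} as a limit of $[\pi_n(A)]\L^{-(n+1)\dim}$ with the normalization taken from the ambient variety; computed inside $\mathscr L(\mathbb A_k^m)$ this is $0$ whenever $d<m$ (by Greenberg, $\pi_n(\mathscr L(X))$ has dimension roughly $(n+1)d$), whereas $\mu'(A)$ computed on $\mathscr L(X)$ is generally nonzero. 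So the ``case proved in \cite[Section 16.3]{CL} for small subassignments of $h[m,0,0]$'' --- where $\mu'$ is taken on $\mathscr L(\mathbb A_k^m)$ --- only yields the proposition when $X=\mathbb A_k^m$; your point (iii) is not a normalization to be ``matched'' but the whole difficulty. Your fallback (Noetherian induction on the smooth locus plus \'etale charts over $\mathbb A_k^{\dim X}$) is the right kind of idea, but as written it is a sketch of precisely the change-of-variables comparison that constitutes the proof of \cite[Theorem 16.3.1]{CL}, so it cannot be left as a remark.

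The paper's route is much shorter: Theorem 16.3.1 of \cite{CL} is already stated for an arbitrary algebraic $k$-variety $X$, a semi-algebraic subset $W$ of $\mathscr L(X)$ and a semi-algebraic function $\alpha$ on $W$, and asserts $\delta\big(\mu(\L^{-\tilde\alpha}[\mathbf 1_{\tilde W}])\big)=\int_W\L^{-\alpha}\,d\mu'$ with the intrinsic $\dim X$-normalization on the right. One simply applies it with $\alpha=0$; the only thing to verify (and the only thing the paper verifies) is that the correspondence $A\leftrightarrow\mathsf A$ of Theorem \ref{equivalence} is the one used in \cite{CL} and that the zero definable function on $\mathsf A$ corresponds to the zero semi-algebraic function on $A$. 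If you want to keep your decomposition argument, you must in the final step cite \cite[Theorem 16.3.1]{CL} for the affine variety $X$ itself (not for $\mathbb A_k^m$) --- at which point the covering reduction becomes unnecessary.
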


\begin{proof}
Note that if a definable function 
$$\alpha: \mathsf A\to h_{\mathbb Z}$$ 
in the language $\mathcal L_{\DP,\P}(k)$ is the zero function on $\mathsf A$, then the semi-algebraic function 
$$\tilde\alpha: A\to \mathbb Z$$ 
corresponding to $\alpha$ via the equivalence of categories of $\SA_k(X)$ and $\SDef_k(X,(\mathcal L_{\DP,\P}(k),T_{\acl}))$ in Theorem \ref{equivalence} is also the zero function on $A$. Now applying Theorem 16.3.1 of \cite{CL} to $\alpha=0$ we get the proposition.
\end{proof}

\subsection{Invariant definable subassignments and their measure}
Let $m, n$ be in $\mathbb N$, and $\gamma=(\gamma_1,\dots,\gamma_m)$ in $\mathbb Z^m$. A definable subassignment $\mathsf X$ of $h[m,n,0]$ is called {\it $\gamma$-invariant} if, for every $K$ in $\Field_k$, $(a,b)$ and $(x,y)$ in 
$$h[m,n,0](K)=K(\!(t)\!)^m\times K^n$$ 
satisfying 
$$\ord_tx_i\geq\gamma_i$$ 
for $1\leq i\leq m$, both elements $(a,b)$ and $(a,b)+(x,y)$ are simultaneously in either $\mathsf X(K)$ or in the complement of $\mathsf X(K)$ in $K(\!(t)\!)^m\times K^n$. A definable subassignment of $h[m,n,0]$ is called {\it invariant} if it is $\gamma$-invariant for some $\gamma$ in $\mathbb Z^m$. In the case $\gamma_i=\beta \in \mathbb Z$ for all $1\leq i\leq m$, we write $\beta$-invariant instead of $\gamma$-invariant. Note that if $\mathsf X$ is $\gamma$-invariant and $\gamma_i\leq \gamma'_i$ for all $1\leq i\leq m$, then $\mathsf X$ is also $(\gamma'_1,\dots,\gamma'_m)$-invariant. It is a fact that any bounded definable subassignment of $h[m,0,0]$ closed in the valuation topology is $\gamma$-invariant for some $\gamma$ in $\mathbb Z^m$. 

Now, let $\beta$ be in $\mathbb N^*$, and let $\mathsf X$ be a bounded definable subassignment of $h[m,n,0]$ with $\ord_t x_i\geq 0$ for every $x=(x_1,\dots,x_m)$ on $\mathsf X$ and for all $1\leq i\leq m$. Then $\mathsf X$ is $\beta$-invariant if and only if there exists a constructible subset $X_{\beta}$ of 
$$\mathbb A_k^{\beta m}\times_k \mathbb A_k^n \cong \mathscr L_{\beta-1}(\mathbb A_k^m) \times \mathbb A_k^n$$ 
such that, for every $K$ in $\Field_k$, 
$$\mathsf X(K)=\left(\pi_{\beta}(K)\right)^{-1}\left(X_{\beta}(K)\right),$$ 
which is the pullback of $X_{\beta}(K)$ under the canonical map 
$$\pi_{\beta}(K): K[[t]]^m \times K^n \to \big(K[t]/(t^{\beta})\big)^m\times K^n\cong K^{\beta m}\times K^n.$$ 
Indeed, we can show this by observing that, for $K$ in $\Field_k$, every fiber of the restriction $\pi_{\beta}(K)$ is definably bijective to the set $(t^{\beta})K[[t]]^m$. The maps $\pi_{\beta}(K)$ induce the canonical morphism 
$$\pi_{\beta}: h[m,n,0]\to h[0,\beta m+n,0],$$
and $\mathsf X$ is $\beta$-invariant if and only if there exists a constructible subset $X_{\beta}\subseteq \mathbb A_k^{\beta m}\times_k \mathbb A_k^n$ such that $\mathsf X=\pi_{\beta}^{-1}(h_{X_{\beta}})$. By abuse of notation, we shall denote the canonical morphism $\mathsf X\to h_{X_{\beta}}$ by $\pi_{\beta}$.

\begin{lemma}\label{defloc}
Let $\beta \leq \beta'$ be in $\mathbb N^*$, let $\mathsf X$ and $X_{\beta}$ be as previous. Then the identity 
$$[h_{X_{\beta'}}]=[h_{X_{\beta}}]\L^{(\beta'-\beta)m}$$ 
holds in $K_0(\RDef_k)$. As a consequence, the element $[h_{X_{\beta}}]\L^{-(\beta+1)m}$ in $K_0(\RDef_k)[\mathbb L^{-1}]$ is independent of the choice of sufficiently large $\beta$.
\end{lemma}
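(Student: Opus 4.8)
The plan is to establish the identity $[h_{X_{\beta'}}] = [h_{X_\beta}]\,\L^{(\beta'-\beta)m}$ in $K_0(\RDef_k)$ and then extract the independence statement as a formal consequence. By induction on $\beta' - \beta$ it suffices to treat the case $\beta' = \beta + 1$; the general case follows by composing the transition maps. So the core task is to understand the relationship between $X_\beta$ and $X_{\beta+1}$ as constructible sets over $k$, equivalently as objects of $\RDef_k$ under the equivalence of Theorem \ref{Lem41} and the Lemma identifying $K_0(\RDef_{h_S}(\mathcal L_{\DP,\P}(k),T_{\acl})) \cong K_0(\Var_S)$.

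First I would recall how $X_\beta$ arises: since $\mathsf X$ is $\beta$-invariant and positively bounded, $\mathsf X = \pi_\beta^{-1}(h_{X_\beta})$ where $X_\beta \subseteq \mathbb A_k^{\beta m} \times_k \mathbb A_k^n \cong \mathscr L_{\beta-1}(\mathbb A_k^m) \times \mathbb A_k^n$ records the truncation data modulo $t^\beta$. The key observation is that $\mathsf X$ is also $(\beta+1)$-invariant (invariance with parameter $\gamma$ is preserved when $\gamma$ increases, as noted in the text), so $\mathsf X = \pi_{\beta+1}^{-1}(h_{X_{\beta+1}})$ as well. Now $X_{\beta+1}$ sits over $X_\beta$ via the truncation morphism $\mathscr L_\beta(\mathbb A_k^m) \to \mathscr L_{\beta-1}(\mathbb A_k^m)$ (times $\Id$ on $\mathbb A_k^n$), and I claim this makes $X_{\beta+1} \to X_\beta$ a Zariski-locally trivial $\mathbb A_k^m$-fibration — indeed for the affine space $\mathbb A_k^m$ the truncation map $\mathscr L_\beta(\mathbb A_k^m) \to \mathscr L_{\beta-1}(\mathbb A_k^m)$ is globally the trivial bundle $\mathbb A_k^{(\beta+1)m} \to \mathbb A_k^{\beta m}$ with fiber $\mathbb A_k^m$ (coordinate-wise, adding one more Taylor coefficient). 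Because $\mathsf X$ is cut out by the same pullback condition at both levels, $X_{\beta+1}$ is precisely the preimage of $X_\beta$ under this projection, hence $X_{\beta+1} \cong X_\beta \times_k \mathbb A_k^m$ as $k$-varieties (or as constructible sets, stratifying $X_\beta$ if needed). Translating through the equivalence $\RDef_k \simeq \Cons_k$ gives $[h_{X_{\beta+1}}] = [h_{X_\beta}]\,\L^m$ in $K_0(\RDef_k)$, which is exactly the $\beta' = \beta+1$ case; iterating yields the stated formula.

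For the consequence: from $[h_{X_{\beta'}}] = [h_{X_\beta}]\,\L^{(\beta'-\beta)m}$ we get, after multiplying both sides by $\L^{-(\beta'+1)m}$ in $K_0(\RDef_k)[\L^{-1}]$, the equality $[h_{X_{\beta'}}]\,\L^{-(\beta'+1)m} = [h_{X_\beta}]\,\L^{(\beta'-\beta)m}\,\L^{-(\beta'+1)m} = [h_{X_\beta}]\,\L^{-(\beta+1)m}$. Thus the element $[h_{X_\beta}]\,\L^{-(\beta+1)m}$ does not depend on $\beta$, provided $\beta$ is large enough that $\mathsf X$ is $\beta$-invariant (and hence $\beta'$-invariant for all $\beta' \geq \beta$), which is what the statement asserts.

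The main obstacle I anticipate is making the fibration claim $X_{\beta+1} \cong X_\beta \times_k \mathbb A_k^m$ fully precise at the level of constructible sets rather than just smooth varieties — one must check that the defining condition "$\mathsf X = \pi_\beta^{-1}(h_{X_\beta})$" genuinely forces $X_{\beta+1}$ to be the full preimage of $X_\beta$ under the truncation, with no extra constraints appearing at level $\beta+1$. This is where one uses that every fiber of the restriction of $\pi_\beta(K)$ to $K[[t]]^m$ is definably bijective to $(t^\beta)K[[t]]^m$ (the fact quoted just before Lemma \ref{defloc}): combining the fiber description at level $\beta$ with that at level $\beta+1$ shows the level-$\beta$ condition is equivalent to the level-$(\beta+1)$ condition composed with the extra truncation, so $X_{\beta+1}$ is indeed the honest preimage. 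Once this bookkeeping is done, everything else is the formal manipulation above.
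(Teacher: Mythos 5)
Your proposal is correct and follows essentially the same route as the paper: the truncation morphism $\mathscr L_{\beta'-1}(\mathbb A_k^m)\times_k\mathbb A_k^n\to \mathscr L_{\beta-1}(\mathbb A_k^m)\times_k\mathbb A_k^n$ is a (here even globally) trivial fibration with fiber $\mathbb A_k^{(\beta'-\beta)m}$, $X_{\beta'}$ is the full preimage of $X_\beta$, and the independence statement is the formal exponent computation. The only difference is that you spell out the reduction to $\beta'=\beta+1$ and the verification that $X_{\beta'}$ is the honest preimage, which the paper simply asserts.
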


\begin{proof}
The natural map 
$$\mathscr L_{\beta'-1}(\mathbb A_k^m) \times_k \mathbb A_k^n \to \mathscr L_{\beta-1}(\mathbb A_k^m) \times_k \mathbb A_k^n$$ 
induced by truncation is a Zariski locally trivial fibration with fiber $\mathbb A_k^{(\beta'-\beta)m}$. Along this map, $X_{\beta'}$ is the preimage of $X_{\beta}$. Thus we get the identity $[h_{X_{\beta'}}]=[h_{X_{\beta}}]\L^{(\beta'-\beta)m}$ in $K_0(\RDef_k)$.
\end{proof}

We denote by $\vol(\mathsf X)$ the image of $[h_{X_{\beta}}]\L^{-(\beta+1)m}\in K_0(\RDef_k)[\mathbb L^{-1}]$ under the canonical morphism (due to \cite[Section 16.1]{CL})
$$K_0(\RDef_k)[\mathbb L^{-1}]\to K_0(\RDef_k(\mathcal L_{\DP,\P}(k),T_{\acl}))[\mathbb L^{-1}] \cong \mathscr M_k.$$

\begin{theorem}\label{comparison}
Let $\mathsf X$ be an invariant bounded definable subassignment of $h[m,n,0]$ such that, for every $(x,y)$ on $\mathsf X$ with $x=(x_1,\dots,x_m)$, $\ord_t x_i\geq 0$ for all $1\leq i\leq m$. Then the identity 
$$\mu(\mathsf X)=\loc(\vol(\mathsf X))$$ 
holds in $\mathscr M_{\loc}$. 
\end{theorem}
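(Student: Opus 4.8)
The plan is to compute $\mu(\mathsf X)$ by splitting the structural projection $\mathsf X\to h_{\Spec k}$ along the truncation morphism $\pi_\beta$ and then applying the elementary formal properties of the Cluckers--Loeser integral: compatibility with composition, the projection formula, and the two ``base'' evaluations, namely integration over residue rings and the volume of a ball.

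First I would fix the combinatorial data. As $\mathsf X$ is invariant it is $\gamma$-invariant for some $\gamma\in\mathbb Z^m$, hence $\beta$-invariant for every integer $\beta\geq\max(\gamma_1,\dots,\gamma_m,1)$; fix such a $\beta\in\mathbb N^*$. Since $\ord_t x_i\geq 0$ on $\mathsf X$, the description recalled before Lemma~\ref{defloc} gives a constructible subset $X_\beta\subseteq\mathbb A_k^{\beta m}\times_k\mathbb A_k^n$ with $\mathsf X=\pi_\beta^{-1}(h_{X_\beta})$, the truncation $\pi_\beta$ restricting to a definable morphism $\pi_\beta\colon\mathsf X\to h_{X_\beta}$ all of whose fibres are, as subassignments, translates of the polydisc
\[
\mathsf B:=\bigl\{z\in h[m,0,0]\mid \ord_t z_i\geq\beta,\ 1\leq i\leq m\bigr\}
\]
(the residue-field coordinates being frozen by the base point). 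By Lemma~\ref{defloc}, $\vol(\mathsf X)$ is the image in $\mathscr M_k$ of $[h_{X_\beta}]\L^{-(\beta+1)m}$, and since $\mathsf X$ is bounded, $[\mathbf 1_{\mathsf X}]\in\I C_+(\mathsf X)$, so $\mu(\mathsf X)$ is defined.

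Then comes the Fubini step. Factor the projection as $\mathsf X\xrightarrow{\ \pi_\beta\ }h_{X_\beta}\xrightarrow{\ q\ }h_{\Spec k}$. By compatibility of $\I C_+$ with composition (one of the axioms of \cite[Theorem~10.1.1]{CL}), $\mu(\mathsf X)$ is the image in $\mathscr M_{\loc}$ of $q_!\bigl((\pi_\beta)_![\mathbf 1_{\mathsf X}]\bigr)$. For the inner push-forward, relative integration is computed fibrewise and the motivic measure is invariant under the definable translation automorphisms, so $(\pi_\beta)_![\mathbf 1_{\mathsf X}]$ is the constant Function on $h_{X_\beta}$ whose value is the motivic volume of $\mathsf B$; the volume of a ball of valuative radius $\beta$ in one valued-field variable, taken to the $m$-th power, yields $\mu(\mathsf B)=\L^{-(\beta+1)m}$ in the normalisation of \cite{CL}, whence $(\pi_\beta)_![\mathbf 1_{\mathsf X}]=\L^{-(\beta+1)m}\,\mathbf 1_{h_{X_\beta}}$. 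For the outer push-forward, $h_{X_\beta}$ is a definable subassignment of $h[0,\beta m+n,0]$, involving only residue-field variables, so integration over it contributes no power of $\L$: by the axiom on integration over residue rings, $q_!\mathbf 1_{h_{X_\beta}}=[h_{X_\beta}]$, whose image in $\mathscr M_{\loc}$ is the class of $X_\beta$. Using the projection formula to extract the constant $\L^{-(\beta+1)m}$, and then the definition of $\vol$ together with the compatibility of $\loc\colon\mathscr M_k\to\mathscr M_{\loc}$ with $K_0(\RDef_k)[\L^{-1}]\to\mathscr M_k$, one obtains $\mu(\mathsf X)=\L^{-(\beta+1)m}[h_{X_\beta}]=\loc(\vol(\mathsf X))$; independence of the choice of $\beta$ is again Lemma~\ref{defloc}.

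The hard part will be the bookkeeping around normalisations rather than any conceptual difficulty. One must check that the motivic volume of the polydisc $\mathsf B$ is exactly $\L^{-(\beta+1)m}$ in the Cluckers--Loeser convention that also underlies the definition of $\vol$, and that $q_!$ over the residue-field subassignment $h_{X_\beta}$ introduces no stray power of $\L$; one should also confirm that the fibrewise evaluation of $(\pi_\beta)_!$ is legitimate in the K-dimension--graded Function framework, the relative K-dimension of $\mathsf X$ over $h_{X_\beta}$ being $m$, so that $(\pi_\beta)_!$ lowers K-dimension by $m$ and lands in the correct graded piece. Once these points are settled the identity is a direct unwinding of the axioms, and the $\mu_e$-equivariant refinement stated in Proposition~\ref{prop56} should follow verbatim with $\mu_e$-actions carried along (cf. Lemma~\ref{Lem42}).
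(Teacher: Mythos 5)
Your proof follows the same route as the paper's: factor the structural projection through $\pi_\beta\colon\mathsf X\to h_{X_\beta}$, evaluate the inner push-forward fibrewise (the fibres being translates of the polydisc $\mathsf B$), and then push forward the resulting constant Function over the residue-field subassignment $h_{X_\beta}$ to get $\L^{-(\beta+1)m}[h_{X_\beta}]=\loc(\vol(\mathsf X))$. The one fact you defer as ``bookkeeping'' --- that $\mu(\mathsf B)=\L^{-(\beta+1)m}$ in the normalisation underlying Lemma~\ref{defloc} --- is exactly what the paper's proof actually establishes, by factoring $\pi_\beta=\mathsf{pr}_{\mathsf X}\circ\mathsf i_{\mathsf X}$ (recording the truncated coefficients $a_{ij}$ as residue-field variables), reducing $\big[\mathbf 1_{\mathsf i(\mathsf X)}\big]$ modulo lower K-dimension to the characteristic Function of the annulus $\{\ord_t x_i=\beta\}$, and then applying Axiom~A7 of \cite[Theorem 10.1.1]{CL} coordinate by coordinate; so the normalisation check you set aside is the technical heart of the theorem rather than a peripheral point, and your appeals to fibrewise evaluation and translation invariance would need to be rewritten in terms of those axioms, since $\I C_+$ does not literally support pointwise integration over fibres, for the proof to be complete.
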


\begin{proof}
Assume that $\mathsf X$ is $\beta$-invariant, for some $\beta$ in $\mathbb N^*$. For $k(\!(t)\!)$-coordinates $(x_1,\dots,x_m)$ in $\mathsf X$, let us write
$$x_i=a_{i0}+a_{i1}t+\cdots+a_{i,\beta-1}t^{\beta-1}+\cdots,\quad 1\leq i\leq m.$$
Consider the inclusion
$$\mathsf i: h[m,n,0] \hookrightarrow h[m,\beta m+n,0]$$
given by $(x,y)\mapsto (x,(a_{i0},a_{i1},\dots,a_{i,\beta-1})_{1\leq i\leq m},y)$, and the projection 
$$\mathsf{pr}: h[m,\beta m+n,0] \to h[0,\beta m+n,0]$$
given by $(x,z)\mapsto z$. Denote by $\mathsf i_{\mathsf X}$ the restriction of $\mathsf{i}$ on $\mathsf{X}$. We can regard $\mathsf i_{\mathsf X}$ as an inclusion 
$$\mathsf i_{\mathsf X}: \mathsf X \to \mathsf X[0,\beta m,0].$$ 
Denote by $\mathsf{pr}_{\mathsf X}$ the restriction of $\mathsf{pr}$ on $\mathsf X[0,\beta m,0]$. Then the composition $\mathsf{pr}_{\mathsf X}\circ \mathsf i_{\mathsf X}$ is nothing but the canonical map $\pi_{\beta}: \mathsf X\to h_{X_{\beta}}$. By the functoriality (Axiom A0) of the integral in \cite[Theorem 10.1.1]{CL} we have $(\pi_{\beta})_!=(\mathsf{pr}_{\mathsf X})_!\circ (\mathsf i_{\mathsf X})_!$, hence 
$$(\pi_{\beta})_!\big(\big[\mathbf 1_{\mathsf X}\big]\big)=(\mathsf{pr}_{\mathsf X})_!\big(\big[\mathbf 1_{\mathsf i(\mathsf X)}\big]\big).$$
Since $\mathsf X$ is $\beta$-invariant, by fixing an element $(a,b)$ in $\mathsf i(\mathsf X)$ we have 
$$\mathsf i(\mathsf X)=\mathsf{pr}_{\mathsf X}^{-1}(h_{X_{\beta}})=\left\{(a,b)+(x,y) \in h[m,\beta m+n,0] \mid \ord_tx_i\geq \beta \ \forall \ 1\leq i\leq m \right\}.$$ 
By definition, constructible motivic Functions on $\mathsf i(\mathsf X)$ are equivalence classes of elements of $C_+(\mathsf i(\mathsf X))$ modulo support of smaller dimension (cf. \cite[Section 3.3]{CL2005}, \cite[Section 6]{CL}), hence in $\I C_+(\mathsf i(\mathsf X))$ we have $\big[\mathbf 1_{\mathsf i(\mathsf X)}\big]=\big[\mathbf 1_{\widetilde{\mathsf X}}\big]$, where $\widetilde{\mathsf X}$ is defined similarly as $\mathsf i(\mathsf X)$ with $\ord_tx_i=\beta$ replacing $\ord_tx_i\geq \beta$. Now, applying Axiom A7 in \cite[Theorem 10.1.1]{CL} inductively, we get 
$$(\mathsf{pr}_{\mathsf X})_!\big(\big[\mathbf 1_{\mathsf i(\mathsf X)}\big]\big)=\mathbb L^{-(\beta+1)m}\big[\mathbf 1_{h_{X_{\beta}}}\big].$$ 

The projection $\mathsf f$ of $\mathsf X$ onto the final object $h_{\Spec k}$ in $\Def_k$ can be factorized through the canonical map $\pi_{\beta}: \mathsf X\to h_{X_{\beta}}$, namely, we have the following commutative diagram
\begin{displaymath}
\xymatrixcolsep{5pc}\xymatrix{
	\mathsf X \ar[r]^{\pi_{\beta}} \ar[rd]_{\mathsf f}
	&h_{X_{\beta}}\ar[d]^{\widetilde{\mathsf f}}\\
	&h_{\Spec k}.
}
\end{displaymath}
Therefore, 
$$\mu(\mathsf X)=\mathsf f_!([\mathbf 1_{\mathsf X}])=\widetilde{\mathsf f}\big(\mathbb L^{-(\beta+1)m}\big[\mathbf 1_{h_{X_{\beta}}}\big]\big).$$
By \cite[Proposition 5.3.1]{CL}, we have
\begin{align*}
\mathscr C_+(h[0,\beta m,0])&\cong SK_0(\RDef_{h[0,\beta m,0]})\otimes_{\mathscr P_+^0(h_{\Spec k})}\mathscr P_+(h_{\Spec k})\\
&\cong SK_0(\RDef_{h[0,\beta m,0]})\otimes_{\mathbb N[\L-1]}\mathbb A_+.
\end{align*}
Thus, the element $\big[\mathbf 1_{h_{X_{\beta}}}\big]$ in $\mathscr C_+(h_{X_{\beta}})$ can be written as $\big[h_{X_{\beta}}\to h_{X_{\beta}}\big]\otimes 1$. Then we have the identities
$$\mu(\mathsf X)=\mathbb L^{-(\beta+1)m}\big[h_{X_{\beta}}\to h_{X_{\beta}}\to h_{\Spec k}\big]=\mathbb L^{-(\beta+1)m}\big[h_{X_{\beta}}\big]=\loc(\vol(\mathsf X))$$
hold true in $\mathscr M_{\loc}$. 
\end{proof}

Let us recall some settings in Section 14.5 of \cite{CL} and Section 4.3 of \cite{LN2020} on the ramification. Consider a formula $\varphi$ in the language $\mathcal L_{\DP,\P}(k[t])$, i.e., the coefficients of $\varphi$ are in $k[t]$ in the valued field sort, in $k$ in the residue field sort, such that $\varphi$ has $m$ free variables in the valued field sort, $n$ free variables in residue field sort, and $r$ free variables in the value group sort. For each $e$ in $\mathbb N^*$, let $\varphi[e]$ denote the formula obtained from $\varphi$ by replacing everywhere $t$ by $t^e$. If $\mathsf X$ is a definable subassignment of $h[m,n,r]$ defined by $\varphi$, we denote by $\mathsf X[e]$ the definable subassignment of $h[m,n,r]$ defined by $\varphi[e]$. Thus, if in addition $\mathsf X$ is bounded, then so is $\mathsf X[e]$, that is, $[\mathbf 1_{\mathsf X[e]}]$ is in $\I C_+(\mathsf X[e])$, for every $e$ in $\mathbb N^*$ (cf. \cite[Proposition 14.5.1]{CL}).

\begin{proposition}[\cite{CL}, Theorem 14.5.3]\label{prop55}
Assume $\mathsf X$ is a bounded definable subassignment of $h[m,0,0]$ defined by a formula in $\mathcal L_{\DP,\P}(k[t])$ with $m$ free variables in the valued field sort. Then the formal power series 
$$Z_{\mathsf X}(T)=\sum_{e\in \mathbb N^*}\mu(\mathsf X[e])T^e$$
is in $\mathscr M_{\loc}[[T]]_{\sr}$.
\end{proposition}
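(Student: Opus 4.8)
The plan is to reduce the statement to the rationality result of Denef--Loeser recalled in Theorem~\ref{Thm34} (in the form given by L\^e--Nguyen \cite{LN2020}), by transporting everything from the Cluckers--Loeser side to the arc-space side via the equivalences established earlier in the paper. The key observation is that the motivic measure $\mu(\mathsf X[e])$, for $\mathsf X$ bounded in $h[m,0,0]$ defined over $k[t]$, can be computed geometrically. First I would pass to the invariant situation: after translating by a fixed element if necessary, I may assume $\mathsf X$ is positively bounded, so $\mathsf X$ sits inside $\{x\in h[m,0,0]\mid \ord_t x_i\geq 0\}$; since $\mathsf X$ is bounded and (after a standard argument, cf.\ the remark before Lemma~\ref{defloc} on closed subassignments) $\beta$-invariant for some $\beta\in\mathbb N^*$, Theorem~\ref{comparison} applies and gives $\mu(\mathsf X)=\loc(\vol(\mathsf X))$. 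The same applies to each $\mathsf X[e]$, but with a subtlety: replacing $t$ by $t^e$ changes the invariance level and introduces the $\mu_e$-action, so the correct statement is the equivariant one $\mu(\mathsf X[e])=\loc(\vol(\mathsf X[e]))$ in $\mathscr M_{\loc}^{\mu_e}$ (this is exactly the second identity in the comparison theorem quoted in the introduction as Theorem~\ref{comparison}, Proposition~\ref{prop56}).

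Next I would identify $\vol(\mathsf X[e])$ with a class of a jet-scheme-type variety attached to $\mathsf X$ and the ramification $e$. Concretely, writing $\mathsf X$ via its truncation description $\mathsf X=\pi_\beta^{-1}(h_{X_\beta})$ with $X_\beta$ a constructible subset of $\mathscr L_{\beta-1}(\mathbb A_k^m)$, the substitution $t\mapsto t^e$ turns $\mathsf X[e]$ into $\pi_{e\beta}^{-1}(h_{X_{e\beta,e}})$ for a constructible set $X_{e\beta,e}\subseteq\mathscr L_{e\beta-1}(\mathbb A_k^m)$ obtained from $X_\beta$ by the ``ramified truncation'' construction, carrying the $\mu_e$-action $\gamma(t)\mapsto\gamma(\lambda t)$. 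Thus $\vol(\mathsf X[e])=[h_{X_{e\beta,e}}]\,\L^{-(e\beta+1)m}$ up to the usual stabilization in $\beta$, and the translation of categories $\RDef_{h_S}\simeq\Cons_S$ (Theorem~\ref{Lem41}) together with its equivariant version (Lemma~\ref{Lem42}) lets me read this as a class in $\mathscr M_{X_0}^{\mu_e}$, and pushing to the point, in $\mathscr M_k^{\mu_e}$. I would then match the series $\sum_e\mu(\mathsf X[e])T^e$ with a series of the shape $\sum_e\mu'(A[e])\L^{-\ell'(e)}T^{\ell(e)}$ appearing in Theorem~\ref{Thm34}: take $A$ to be the semi-algebraic set corresponding to $\mathsf X$ under the equivalence $\SA_k(\mathbb A_k^m)\simeq\SDef_k(\mathbb A_k^m,\mathcal L_{\DP,\P}(k),T_{\acl})$ of Theorem~\ref{equivalence}, note that the $A[e]$ are defined by boolean combinations of conditions of the forms \eqref{eq31} and \eqref{eq32} (no $\overline{\ac}$-conditions are needed for a purely valued-field subassignment with the $\ord_t x_i\geq 0$ normalization), take $r=0$, $\Delta=\mathbb R_{\geq0}$, $\ell(e)=e$, $\ell'(e)=0$. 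The coefficient $\mu'(A[e])$ computed by Denef--Loeser's volume is, by the compatibility $\delta(\mu(\mathsf A))=\mu'(A)$ recorded in the proposition after Theorem~\ref{comparison} (and its equivariant refinement), the image of $\mu(\mathsf X[e])=\loc(\vol(\mathsf X[e]))$ under the completion map; since the series in Theorem~\ref{Thm34} is rational in $\mathscr M_k^{\hat\mu}[[T]]_{\sr}$ and the denominators involved are products of $1-\L^p T^q$ which are already invertible at the level of $\mathscr M_{\loc}^{\hat\mu}[[T]]_{\sr}$, I would lift the rationality back to $\mathscr M_{\loc}[[T]]_{\sr}$ (the case at hand, $\mathsf X\subseteq h[m,0,0]$ with no residue-field variables and a uniform $\mu_e$-structure, in fact lands in the non-equivariant $\mathscr M_{\loc}$ because the $\mu_e$-action is trivial on the relevant pieces modulo lower dimension — this is precisely the trivial-fibre computation used in the proof of Lemma~\ref{defloc}).

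Alternatively, and perhaps more cleanly, one can argue \emph{entirely inside the Cluckers--Loeser formalism}: Proposition~\ref{prop55} is quoted as Theorem~14.5.3 of \cite{CL}, so the honest plan is simply to invoke that theorem. The only thing to check is that our hypotheses match its hypotheses — $\mathsf X$ bounded, defined by a formula in $\mathcal L_{\DP,\P}(k[t])$ with $m$ free valued-field variables and no residue-field or value-group variables — which they do by assumption; then $[\mathbf 1_{\mathsf X[e]}]\in\I C_+(\mathsf X[e])$ for every $e$ by Proposition~14.5.1 of \cite{CL} (as recalled just before the statement), so $\mu(\mathsf X[e])$ is defined, and Theorem~14.5.3 gives that $Z_{\mathsf X}(T)\in\mathscr M_{\loc}[[T]]_{\sr}$. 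The role of the preceding sections of the present paper is then to make this statement meaningful in our setting and compatible with the geometric picture via Theorem~\ref{comparison}, rather than to reprove it.

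\textbf{Main obstacle.} The delicate point is bookkeeping the passage $t\mapsto t^e$ correctly: it multiplies orders of vanishing by $e$, so a $\beta$-invariant $\mathsf X$ yields an $e\beta$-invariant $\mathsf X[e]$, and the associated jet-variety $X_{e\beta,e}$ must be described compatibly enough that $\sum_e[X_{e\beta,e}]\,\L^{-(e\beta+1)m}T^e$ falls under the cone-and-linear-form format of Theorem~\ref{Thm34} (here the relevant linear form is $\ell(e)=e$ and the $\L$-power exponent grows like $e\beta m$, which must be absorbed into $\ell'$ or into a substitution $T\mapsto\L^{-\beta m}T$). Handling the $\mu_e$-equivariance uniformly in $e$ — i.e.\ working in $\mathscr M^{\hat\mu}_{\loc}$ and then descending to $\mathscr M_{\loc}$ because, for subassignments of $h[m,0,0]$ with the positivity normalization, the $\hat\mu$-action is geometrically trivial up to lower-dimensional strata — is the other point that needs care, and is exactly where the equivariant equivalence of Lemma~\ref{Lem42} and the remark on $\hat\mu$-actions on $h_{\mathcal X}$ (via $(\lambda,t)\mapsto\lambda t$) are used.
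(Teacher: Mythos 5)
Your second, ``honest'' plan is exactly the paper's approach: the statement carries the attribution \cite{CL}, Theorem~14.5.3 because the paper offers no proof of its own and simply cites Cluckers--Loeser, after first noting via Proposition~14.5.1 of \cite{CL} that each $[\mathbf 1_{\mathsf X[e]}]$ lies in $\I C_+(\mathsf X[e])$ so that $\mu(\mathsf X[e])$ is defined. That plan is complete as stated.

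Your first plan, taken on its own, has real gaps that would need filling. Theorem~\ref{comparison} and Proposition~\ref{prop56} require $\mathsf X$ to be \emph{invariant}; the hypothesis here is only boundedness, and the remark before Lemma~\ref{defloc} supplies invariance only for bounded subassignments that are in addition closed in the valuation topology --- no reduction to the closed or invariant case is given, and Proposition~\ref{prop55} is deliberately stated without an invariance hypothesis precisely because it is imported from \cite{CL} rather than derived from the geometric comparison. Your reduction to the positively bounded case ``after translating by a fixed element'' is also delicate: shifting or scaling coordinates by a power $t^{-s}$ changes the $\mathcal L_{\DP,\P}(k[t])$-formula defining $\mathsf X$ into one with coefficients outside $k[t]$, which is exactly what makes $\mathsf X[e]$ meaningful, so the ramification operation $t\mapsto t^e$ no longer commutes with the reduction. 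Finally, the bridge to Denef--Loeser's volume is the identity $\delta(\mu(\mathsf A))=\mu'(A)$ through the completion morphism $\delta:\mathscr M_{\loc}\to\widehat{\mathscr M}_k$, which is not known to be injective; so even granting rationality of $\sum_e\mu'(A[e])T^e$ via Theorem~\ref{Thm34}, one cannot conclude rationality of $\sum_e\mu(\mathsf X[e])T^e$ in $\mathscr M_{\loc}[[T]]_{\sr}$ simply by ``lifting back.'' The direct citation of \cite{CL}, Theorem~14.5.3 is the only watertight route, and it is the one the paper actually uses.
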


Let $\beta$ be in $\mathbb N^*$, and $\mathsf X$ a $\beta$-invariant bounded definable subassignment of $h[m,n,0]$ defined by a formula in $\mathcal L_{\DP,\P}(k[t])$. Then, for every $e$ in $\mathbb N^*$, the bounded definable subassignment $\mathsf X[e]$ of $h[m,n,0]$ is $\beta e$-invariant; therefore, there exists a constructible subset $X_{\beta e}$ of $\mathscr L_{\beta e-1}(\mathbb A_k^m) \times_k \mathbb A_k^n$ such that, for every $K$ in $\Field_k$, $\mathsf X[e](K)$ is the pullback of $X_{\beta e}(K)$ under the canonical map 
$$K(\!(t)\!)^m \times K^n \to \big(K[t]/(t^{\beta e})\big)^m\times K^n.$$

\begin{proposition}\label{prop56}
Let $\beta$ be in $\mathbb N^*$, and let $\mathsf X$ be a $\beta$-invariant bounded definable subassignment of $h[m,n,0]$ defined by a formula $\varphi$ in $\mathcal L_{\DP,\P}(k[t])$ not containing the symbol $\overline{\ac}$. Then the definable subassignment $h_{X_{\beta e}}$ is stable by the natural $\mu_e$-action on $h[m,n,0]$ defined by 
$$\lambda\cdot (x,\xi)=(\lambda x,\xi),\quad \text{with}\quad \lambda x(t)=x(\lambda t).$$ 
As a consequence, the quantity $\vol({\mathsf X}[e])$ is an element in $\mathscr M_k^{\mu_e}$, and the identity 
$$\mu(\mathsf X[e])=\loc(\vol({\mathsf X}[e]))$$ 
holds in $\mathscr M_{\loc}^{\mu_e}$, thus the series 
$$Z_{\mathsf X}(T)=\sum_{e\in \mathbb N^*}\mu(\mathsf X[e])T^e$$
is in $\mathscr M_{\loc}^{\hat\mu}[[T]]_{\sr}$.
\end{proposition}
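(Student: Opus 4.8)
The plan is to prove the three assertions in order. \textbf{$\mu_e$-stability of $h_{X_{\beta e}}$.} By Denef--Pas quantifier elimination (as used in the proof of Theorem~\ref{equivalence}), and since $r=0$ while $\varphi$ contains no $\overline{\ac}$, the formula $\varphi$ is equivalent to a Boolean combination of Presburger conditions on the integers $\ord_t g_j(\underline x)$, with the $g_j$ polynomials with coefficients in $k[t]$, together with $\mathbf L_{\Rings}$-conditions involving the residue variables $\xi$ alone. Passing to $\varphi[e]$ replaces each $g_j$ by $g_j[e]$ and leaves the $\xi$-conditions unchanged. For $\lambda\in\mu_e$ one has $\lambda^e=1$, whence $g_j[e](\lambda\cdot x)(t)=g_j[e](x)(\lambda t)$, a series with the same $t$-order as $g_j[e](x)$; since the $\mu_e$-action fixes $\xi$, every atomic constituent of $\varphi[e]$ is preserved, so $\mathsf X[e]$ is $\mu_e$-stable in $h[m,n,0]$. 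As the truncation $\pi_{\beta e}\colon h[m,n,0]\to h[0,\beta em+n,0]$ is $\mu_e$-equivariant (acting by $a_{i,j}\mapsto\lambda^j a_{i,j}$ on jet coordinates, trivially on the $\mathbb A_k^n$-factor), surjective on points, and $\mathsf X[e]=\pi_{\beta e}^{-1}(h_{X_{\beta e}})$, it follows that $h_{X_{\beta e}}$, equivalently the constructible set $X_{\beta e}$, is $\mu_e$-stable; this action is automatically good, $\mu_e$ being finite and $X_{\beta e}$ constructible in affine space, so $[X_{\beta e}]\in K_0^{\mu_e}(\Var_k)$.

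\textbf{The element $\vol(\mathsf X[e])$ and the comparison identity.} The $\mu_e$-equivariant Grothendieck-ring isomorphism $K_0(\RDef_{h_S}^{\mu_e}(\mathcal L_{\DP,\P}(k),T_{\acl}))\cong K_0^{\mu_e}(\Var_S)$ (a consequence of Lemma~\ref{Lem42}), together with the $\mu_e$-equivariant analogue of Lemma~\ref{defloc} --- in which $\mathscr L_{\beta'e-1}(\mathbb A_k^m)\times\mathbb A_k^n\to\mathscr L_{\beta e-1}(\mathbb A_k^m)\times\mathbb A_k^n$ is a trivial $\mathbb A_k^{(\beta'-\beta)em}$-bundle with linear fibre action $a_{i,j}\mapsto\lambda^j a_{i,j}$, so that $[h_{X_{\beta'e}}]=[h_{X_{\beta e}}]\L^{(\beta'-\beta)em}$ in $K_0^{\mu_e}(\RDef_k)$ by the relation identifying affine lifts of a fixed action --- gives a well-defined $\vol(\mathsf X[e])\in\mathscr M_k^{\mu_e}$. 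For $\mu(\mathsf X[e])=\loc(\vol(\mathsf X[e]))$ in $\mathscr M_{\loc}^{\mu_e}$ I would run the proof of Theorem~\ref{comparison} verbatim for $\mathsf X[e]$: the factorisation $\pi_{\beta e}=\mathsf{pr}_{\mathsf X[e]}\circ\mathsf i_{\mathsf X[e]}$, Axiom~A0 of \cite[Theorem~10.1.1]{CL}, the passage from $\mathsf i(\mathsf X[e])$ to the subassignment with $\ord_t x_i=\beta e$ (which differs by a $\mu_e$-stable piece of smaller K-dimension), and iterated Axiom~A7 reduce $\mu(\mathsf X[e])$ to $\L^{-(\beta e+1)m}[h_{X_{\beta e}}]$, and the identification $\mathscr C_+(h_{X_{\beta e}})\cong SK_0(\RDef_{h_{X_{\beta e}}})\otimes_{\mathbb N[\L-1]}\mathbb A_+$ finishes it. One must check compatibility with the $\mu_e$-structure of the first part at every step; this is formal for A0, for the Grothendieck-ring manipulations, and for discarding the lower-dimensional locus, while for Axiom~A7 it comes down to the polydisc bundle $\{\ord_t x_i\ge\beta e\}$ over $h_{X_{\beta e}}$ being, $\mu_e$-equivariantly, a product whose relative measure respects coordinate scaling. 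Hence the identity holds in $\mathscr M_{\loc}^{\mu_e}$, a fortiori in $\mathscr M_{\loc}^{\hat\mu}$.

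\textbf{Rationality.} By Proposition~\ref{prop55}, $Z_{\mathsf X}(T)=\sum_{e\in\mathbb N^*}\mu(\mathsf X[e])T^e$ already lies in $\mathscr M_{\loc}[[T]]_{\sr}$; it remains to refine the coefficient ring to $\mathscr M_{\loc}^{\hat\mu}$. I would obtain this by rerunning the proof of \cite[Theorem~14.5.3]{CL} in the $\mu_e$-equivariant setting, i.e.\ with coefficients in the equivariant Grothendieck rings: that proof decomposes $\mathsf X$, hence $\mathsf X[e]$, via a $t$-adapted cell decomposition and writes $\mu(\mathsf X[e])$ as a finite sum of geometric series in $T^e$ with coefficients built from the cells, and since $\varphi$ has no $\overline{\ac}$ all the data of the decomposition are defined without angular-component conditions, hence $\mu_e$-stable; so by the first two parts every coefficient lifts to $\mathscr M_{\loc}^{\mu_e}$ and $Z_{\mathsf X}(T)$ lands in $\mathscr M_{\loc}^{\hat\mu}[[T]]_{\sr}$. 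Equivalently, via Theorem~\ref{equivalence} the sets $\mathsf X[e]$ correspond to stable semi-algebraic subsets of $\mathscr L(\mathbb A_k^m)$ that are finite Boolean combinations of sets of the forms (\ref{eq31}) and (\ref{eq32}) --- never (\ref{eq33}), precisely because there is no $\overline{\ac}$ --- and the rationality is an instance of the L\^e--Nguyen theorem (Theorem~\ref{Thm34}, with $r=0$, $\Delta=\mathbb R_{\ge0}$, $\ell(e)=e$, $\ell'=0$) once the family has been put into the shape $\{\gamma\in A_\alpha:f(\gamma)=t^e\bmod t^{e+1}\}$ by the usual change of variables and choice of cone; the resulting $\mu'$-series is identified with $Z_{\mathsf X}(T)$ through the comparison of the second part (both sides being $[X_{\beta e}]$ times a power of $\L$, up to $\loc$), and tensoring with the classes of the constructible, trivially $\mu_e$-acted residue pieces accounts for the $\xi$-variables.

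\textbf{Main obstacle.} The real work is the equivariant bookkeeping in the second and third parts --- verifying that Cluckers--Loeser's integration machinery (Theorem~10.1.1, in particular Axiom~A7) and rationality machinery (Theorem~14.5.3, or equivalently the input to Theorem~\ref{Thm34}) carry over to coefficients in the $\mu_e$-equivariant Grothendieck rings, compatibly with the action isolated in the first part. The hypothesis that $\varphi$ contains no $\overline{\ac}$ is indispensable and enters exactly here: under $x_i(t)\mapsto x_i(\lambda t)$ an angular component $\overline{\ac}(g_j[e](x))$ gets multiplied by $\lambda^{\ord_t g_j[e](x)}$, an exponent that is not constant on $\mathsf X[e]$, so an $\overline{\ac}$-condition is genuinely not $\mu_e$-stable and neither the jet schemes $X_{\beta e}$ nor the cells of the decomposition would carry the required $\mu_e$-action.
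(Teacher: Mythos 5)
Your argument for the $\mu_e$-stability of $h_{X_{\beta e}}$ is in substance the same as the paper's: Denef--Pas quantifier elimination, the observation that the no-$\overline{\ac}$ hypothesis reduces the formula to Presburger conditions on $\ord_t f_i(\underline x)$ together with residue conditions on $\xi$ alone, and the fact that $\ord_t$ is preserved under $x(t)\mapsto x(\lambda t)$ while the $k[t^e]$-coefficients of the $f_i$ are fixed by $\lambda\in\mu_e$. You transfer the stability from $\mathsf X[e]$ to $X_{\beta e}$ along the $\mu_e$-equivariant truncation $\pi_{\beta e}$; the paper instead identifies $h_{X_{\beta e}}$ directly with a $\mu_e$-stable definable subset of $h[m,n,0]$. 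These are the same idea, differently packaged. Your diagnosis of why the no-$\overline{\ac}$ hypothesis is indispensable (an $\overline{\ac}$-condition picks up a factor $\lambda^{\ord_t g(x)}$ with non-constant exponent, so angular-component conditions are genuinely not $\mu_e$-stable) is correct and is exactly the point the paper is exploiting.

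Where you go beyond the paper: its written proof stops after establishing the stability of $h_{X_{\beta e}}$, treating the lift of $\vol(\mathsf X[e])$ to $\mathscr M_k^{\mu_e}$, the equivariant comparison identity, and the rationality of $Z_{\mathsf X}(T)$ in $\mathscr M_{\loc}^{\hat\mu}[[T]]_{\sr}$ as unelaborated ``consequences''. You make them explicit with the right ingredients — Lemma~\ref{Lem42}, an equivariant analogue of Lemma~\ref{defloc}, an equivariant rerun of Theorem~\ref{comparison} and of \cite[Theorem~14.5.3]{CL} — and you honestly flag the equivariant bookkeeping in Axiom~A7 and in the cell-decomposition argument as the genuine work, which is accurate. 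One caution on your alternative rationality route via Theorem~\ref{Thm34}: the L\^e--Nguyen theorem is formulated for families $A_{e,\alpha}=\{\gamma\in A_\alpha : f(\gamma)\equiv t^e \bmod t^{e+1}\}$ with a fixed regular function $f$, which is a different ramification construction from substituting $t^e$ for $t$ in the defining formula of $\mathsf X$; reducing the latter to the former requires a non-trivial change of variables that you only gesture at, so the direct equivariant rerun of \cite[Theorem~14.5.3]{CL} is the safer route and is presumably what the paper intends. Also, while not restated in the proposition, the positivity hypothesis $\ord_t x_i\ge 0$ is needed for $X_{\beta e}$ to be defined via truncation to $K[t]/(t^{\beta e})$; the paper flags ``positively bounded'' in the introduction, so you should treat it as implicit.
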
 

\begin{proof}
By definition of $\beta$-invariance, we have
$$h_{X_{\beta e}}\cong \left\{(x,\xi)\in h[m,n,0] \mid \varphi[e](x,\xi), 0\leq \ord_tx_i \leq \beta e-1, 1\leq i\leq m\right\}.$$
By Corollary 2.1.2 of \cite{CL}, the $\mathcal L_{\DP,\P}(k[t])$-formula $\varphi$ is a finite disjunction of formulas of the form 
$$\psi(\overline{\ac}g'_1(x),\dots,\overline{\ac}g'_{l'}(x),\xi)\wedge \vartheta(\ord_t f'_1(x),\dots,\ord_t f'_l(x),\alpha),$$
where $f'_i$ and $g'_j$ are polynomials over $k[t]$, $\psi$ is an $\mathbf L_{\Rings}$-formula, and $\vartheta$ is an $\mathbf L_{\PR}$-formula. Since $\varphi$ is equivalent to an $\mathcal L_{\DP,\P}(k[t])$-formula without angular component symbol $\overline{\ac}$ due to the hypothesis, $\varphi[e]$ is a finite disjunction of formulas of the form 
$$\psi_1((g_j(x))_j)\wedge \psi_2(\xi)\wedge \vartheta((\ord_t f_i(x))_i,\alpha),$$ 
where $f_i$ and $g_j$ are polynomials over $k[t^e]$, $\psi_1$ and $\psi_2$ are $\mathbf L_{\Rings}$-formulas, and $\vartheta$ is an $\mathbf L_{\PR}$-formula. If in $f_i(x(t))$ and $g_j(x(t))$ we replace $t$ by $\lambda t$, for any $x(t)$ in $K(\!(t)\!)^m$, $\lambda$ in $\mu_e(K)$, $K$ in $\Field_k$, then we get expressions of $f_i(x(\lambda t))$ and $g_j(x(\lambda t))$, since the coefficients of polynomials $f_i$ and $g_j$ in $k[t^e][x]$ do not change. This proves that if $(x,\xi)$ is in $h_{X_{\beta e}}$, so is $(\lambda x,\xi)$, for any $\lambda$ in $\mu_e$, that is, $h_{X_{\beta e}}$ is stable under the action of $\mu_e$.
\end{proof}

Now assume that $\mathsf X$ is small. As mentioned above, there is a canonical action of $\hat\mu$ on $h[m,0,0]$ induced by 
$$(\lambda,t)\mapsto \lambda t.$$ 
We say that the definable subassignment $\mathsf X$ is $\hat\mu$-stable if there exists an $n\in \mathbb N^*$ such that, for every $x=(x_1(t),\dots,x_m(t))$ in $\mathsf X$ and $\lambda$ in $\mu_n$, the point 
$$\lambda \cdot x=(x_1(\lambda t),\dots,x_m(\lambda t))$$ 
is in $\mathsf X$. Since formulas defining $\mathsf X$ is in the Denef-Pas language, by quantifier elimination for algebraically closed fields, they also define a semi-algebraic subset $X$ of some arc space $\mathscr L(\mathbb A_k^m)$ of $\mathbb A_k^m$. The assignment 
$$\mathsf X \mapsto X$$ 
carries the canonical $\hat\mu$-action on $h[m,0,0]$ to the canonical $\hat\mu$-action on $\mathscr L(\mathbb A_k^m)$, and in that way, $X$ is also $\hat\mu$-stable in $\mathscr L(\mathbb A_k^m)$. As in \cite[Theorem 16.3.1, Remark 16.3.2]{CL} we can see that $X$ is measurable as $\mathsf X$ is measurable, and that since with the above action $\mu'(X)$ is in $\mathscr M_{\loc}^{\hat\mu}$ the measure $\mu(\mathsf X)$ of $\mathsf X$ is also in $\mathscr M_{\loc}^{\hat\mu}$. Here, as in \cite[Theorem 16.3.1]{CL}, $\mu'$ stands for Denef-Loeser's motivic measure \cite{DL2}, and further by \cite[Remark 16.3.2]{CL}, we can consider that this measure takes value in $\mathscr M_{\loc}^{\hat\mu}$ for the context with $\hat\mu$-action.


\begin{ack}
Part of the present paper was worked out at the Vietnam Institute for Advanced Study in Mathematics (VIASM), the author would like to thank the institute for warm hospitality. 
\end{ack}


\begin{thebibliography}{9999}
\bibitem{CR2019}
{J. Cely, M. Raibaut}, {\it On the commutativity of pull-back and push-forward functors on motivic constructible functions}, {J. Symb. Log.} {\bf 84} (2019), 1252--1278.

\bibitem{CHL}
{R. Cluckers, T. Hales, F. Loeser}, {\it Transfer principle for the fundamental lemma}, On the stabilization of the trace formula, 309--347, Stab. Trace Formula Shimura Var. Arith. Appl., 1, Int. Press, Somerville, MA, 2011.

\bibitem{CL2005}
{R. Cluckers, F. Loeser}, {\it Ax-Kochen-Er\v{s}ov theorems for $p$-adic integrals and motivic integration}, Geometric methods in algebra and number theory, 109--137, Progr. Math., 235, Birkhäuser Boston, Boston, MA, 2005.

\bibitem{CL}
{R. Cluckers, F. Loeser}, {\it Constructible motivic functions and motivic integration}, {Invent. Math.} {\bf 173} (2008), no. 1, 23--121. 

\bibitem{CL2010}
{R. Cluckers, F. Loeser}, {\it Constructible exponential functions, motivic Fourier transform and transfer principle}, {Ann. of Math. (2)} {\bf 171} (2010), 1011--1065.

\bibitem{CGH2014}
{R. Cluckers, J. Gordon, I. Halupczok}, {\it Motivic functions, integrability, and applications to harmonic analysis on p-adic groups}, Electron. Res. Announc. Math. Sci. {\bf 21} (2014), 137--152.

\bibitem{DL1}
{J. Denef, F. Loeser}, {\it Motivic Igusa zeta functions}, {J. Algebraic Geom.} {\bf 7} (1998), 505--537.

\bibitem{DL2}
{J. Denef, F. Loeser}, {\it Germs of arcs on singular algebraic varieties and motivic integration}, {Invent. Math.} {\bf 135} (1999), 201--232.

\bibitem{DL3}
{J. Denef, F. Loeser}, {\it Definable sets, Motives, and $p$-adic integrals}, {J. Amer. Math. Soc.} {\bf 14} (2001), 429--469.

\bibitem{GY2009}
{J. Gordon, Y. Yaffe}, {\it An overview of arithmetic motivic integration}, Ottawa lectures on admissible representations of reductive $p$-adic groups, 113--149, Fields Inst. Monogr., 26, Amer. Math. Soc., Providence, RI, 2009.

\bibitem{HK}
{E. Hrushovski, D. Kazhdan}, {\it Integration in valued fields}, {in Algebraic and Number Theory}, {Progress in Mathematics} {\bf 253}, 261--405 (2006), Birkh \"{a}user.

\bibitem{HL2016}
{E. Hrushovski, F. Loeser}, {\it Non-archimedean tame topology and stably dominated types}, {Annals of Mathematics Studies} {\bf 192}, Princeton University Press, Princeton, NJ, 2016.

\bibitem{LN2020}
Q.T. L\^e, H.D. Nguyen, {\it Equivariant motivic integration and proof of the integral identity conjecture for regular functions}, {Math. Ann.} {\bf 376} (2020), 1195--1223.

\bibitem{LS}
{F. Loeser, J. Sebag}, {\it Motivic integration on smooth rigid varieties and invariants of degenerations}, {Duke Math. J.} {\bf 119} (2003), 315--344.

\bibitem{Ni2008}
{J. Nicaise}, {\it A trace formula for rigid varieties, and motivic Weil generating series for formal schemes}, {Math. Ann.} {\bf 343} (2009), 285--349.

\bibitem{NS}
{J. Nicaise, J. Sebag}, {\it Motivic Serre invariants, ramification, and the analytic Milnor fiber}, {Invent. Math.} {\bf 168} (2007), 133--173.

\bibitem{Pas}
{J. Pas}, {\it Uniform $p$-adic cell decomposition and local zeta functions}, {J. Reine Angew. Math.} {\bf 399} (1989), 137--172.

\bibitem{Se2004}
{J. Sebag}, {\it Int\'egration motivique sur les sch\'emas formels}, {Bull. Soc. Math. France} {\bf 132} (2004), 1--54, S\'eminaire Bourbaki 1999/2000, no. 874.
\end{thebibliography}
\end{document}